\numberwithin{equation}{section}
\theoremstyle{plain}
\newtheorem{theorem}{Theorem}
\newtheorem{lemma}[theorem]{Lemma}
\newtheorem{corollary}[theorem]{Corollary}
\newtheorem{proposition}[theorem]{Proposition}
\newtheorem{remark}[theorem]{Remark}
\newtheorem{definition}[theorem]{Definition}
\DeclareMathOperator*{\argmin}{argmin}
\newcommand{\E}{\mathbb{E}}
\newenvironment{enum}{
\begin{enumerate}
  \setlength{\itemsep}{1pt}
  \setlength{\parskip}{0pt}
  \setlength{\parsep}{0pt}
}{\end{enumerate}}
\let\hat\widehat
\let\tilde\widetilde
\begin{document}

\begin{frontmatter}
\title{Efficient Nonparametric Conformal Prediction Regions
%\protect\thanksref{T1}
}
\runtitle{Nonparametric Conformal Prediction}
%\thankstext{T1}{Footnote to the title with the `thankstext' command.}

\begin{aug}
\author{\fnms{Jing} \snm{Lei}\thanksref{t1,m1}\ead[label=e1]{jinglei@andrew.cmu.edu}},
\author{\fnms{James} \snm{Robins}\thanksref{m2}\ead[label=e2]{robins@hsph.harvard.edu}}
\and
\author{\fnms{Larry} \snm{Wasserman}\thanksref{t3,m1}
\ead[label=e3]{larry@stat.cmu.edu}
}

\thankstext{t1}{Supported by NSF Grant BCS-0941518.}
%\thankstext{t2}{Supported by }
\thankstext{t3}{Supported by NSF Grant DMS-0806009 and
Air Force Grant FA95500910373.}
\runauthor{J. Lei et al.}

\affiliation{Carnegie Mellon University\thanksmark{m1}
and Harvard University\thanksmark{m2}}

\address{Department of Statistics\\
Carnegie Mellon University\\
Pittsburgh, Pennsylvania 15213\\
USA\\
\printead{e1}\\
\phantom{E-mail:\ }\printead*{e3}}

\address{Department of Epidemiology\\
Harvard School of Public Health\\
677 Huntington Avenue\\
Boston, Massachusetts 02115\\
USA\\
\printead{e2}\\
}
\end{aug}

\begin{abstract}
We investigate and extend the conformal prediction
 method due to \citet{VovkGS08b}
to construct nonparametric prediction regions.
These regions
have guaranteed distribution free,
finite sample coverage, without any assumptions on
the distribution or the bandwidth.
Explicit convergence rates of the loss function are
established
for such regions under standard regularity conditions.
Approximations for
simplifying implementation and data driven bandwidth selection
methods are also discussed. The theoretical properties of our
method
are demonstrated
through simulations.
\end{abstract}

\begin{keyword}[class=AMS]
\kwd[Primary ]{62G15}
\kwd[; secondary ]{62G07}
\end{keyword}

\begin{keyword}
\kwd{nonparametric prediction region}
\kwd{conformal prediction}
\kwd{kernel density}
\kwd{efficiency}
\end{keyword}

\end{frontmatter}

\section{Introduction}

\subsection{Prediction regions and density level sets}

Consider the following prediction problem:
we observe {\em iid} data
$Y_1,\ldots,Y_n\in\mathbb{R}^d$ from a distribution $P$
and we want to construct a prediction region
$C_n=C_n(Y_1,\ldots, Y_n)\subseteq \mathbb{R}^d$ such that
\begin{equation}\label{eq:valid_exp}
\mathbb{P}(Y_{n+1}\in C_n)\ge 1-\alpha
\end{equation}
for fixed $0 < \alpha < 1$
where
$\mathbb{P}= P^{n+1}
%\equiv\overbrace{P\times \cdots \times P}^{n+1}
$ is the product probability measure over the $(n+1)$-tuple
$(Y_1,\dots,Y_{n+1})$.\footnote{In general,
we let $\mathbb{P}$ denote $P^n$ or $P^{n+1}$
depending on the context.}
This is equivalent to
$\E \left[P(C_n)\right] \ge 1-\alpha$
where $P(C_n)$ is the
probability mass of the random set $C_n$.
In other words,
$C_n$ traps a future independent observation $Y_{n+1}\sim P$
with probability at least $1-\alpha$.
The random set $C_n$ is called a {\em $(1-\alpha)$-prediction region}
or a {\em $(1-\alpha)$-tolerance region}.  In this paper we
will use the name ``prediction region'' for consistency
of presentation
while ``tolerance region'' is often used as a synonym in the
literature.

Prediction is a major focus of machine learning and statistics
although the emphasis is often on point prediction.
Prediction regions go beyond merely providing a point prediction and
are useful in a variety of applications
including quality control
and anomaly detection.  For example, suppose a sequence of
items is being produced or
observed. If one item falls out of the prediction region
constructed from the previous
samples, it indicates that this item is likely to be different
from the rest of the
sample and some further investigation may be necessary.

Another application of prediction regions is data description
and clustering.
Given a random sample from a distribution, it is often of
interest to ask where most of
the probability mass is concentrated.  A natural answer to this question
is the density level set $L(t)=\{y\in \mathbb{R}^d:p(y)\ge t\}$, where $p$
is the density function of $P$.  When the distribution $P$ is multimodal,
a suitably chosen $t$ will give a clustering of the underlying
distribution \citep{Hartigan75}.  When $t$ is given,
consistent estimators of $L(t)$ and rates of convergence
have been studied in detail, for example, in \citet{Polonik95,
Tsybakov97,BailloCC01,Baillo03,Cadre06,WilletN07,RigolletV09,RinaldoW10}.
It often makes sense to define $t$ implicitly using the desired
probability coverage $(1-\alpha)$:
\begin{equation}\label{eq:t_alpha}
t(\alpha)=\inf \Bigl\{ t:\ P(L(t))\ge 1-\alpha \Bigr\}.
\end{equation}
Let $\mu(\cdot)$ denote the Lebesgue measure on $\mathbb{R}^d$.
If the contour $\{y:p(y)=t(\alpha)\}$ has zero Lebesgue measure, then
it is easily shown that
$$
C^{(\alpha)}:=L(t(\alpha))=
\argmin_C \mu(C)\,,
$$
where the min is over
$\bigl\{C:\ P(C)\ge 1-\alpha\bigr\}$.
Therefore, the density based clustering problem can sometimes be formulated
as estimation of the minimum volume prediction region.

The study of prediction regions has a long history in statistics;
see, for example
\citet{Wilks41,Wald43,FraserG56,ChatterjeeP80,DiBucchianicoEM01,Cadre06,LiL08}.
For a thorough introduction to prediction regions, the reader is referred
to the books by \citet{Guttman70} and \citet{AichisonD75}.
In this paper we study a newer method due to
\citet{VovkGS08b} which we describe in Section
\ref{sec::vovk}.

\subsection{Validity and efficiency}

Let $C_n$ be a prediction region.
There are two natural criteria to
measure
its quality: {\em validity} and {\em efficiency}.  By validity we mean that
$C_n$ has the desired coverage for all $P$, whereas by efficiency we mean that
$C_n$ is close to the optimal prediction region $C^{(\alpha)}$.

\subsubsection{Validity}

By definition, a prediction region $C_n$ is a function of the
sample $(Y_1,...,Y_n)$ and hence its coverage $P(C_n)$ is a random
quantity. To formulate the notion of validity of a prediction region,
\cite{FraserG56} defined $(1-\alpha)$-prediction regions with
$\tau$-confidence for $C_n$ satisfying
\begin{equation}\label{eq:tol_conf}
\mathbb{P}(P(C_n)\ge 1-\alpha)\ge \tau.
\end{equation}
However, evaluating the exact probability in the above definition is
rarely possible.  Most work on nonparametric prediction regions validate
their methods using an asymptotic version \citep{ChatterjeeP80,LiL08}:
$$
\liminf_{n\rightarrow \infty}\mathbb{P}\left[P(C_n)\ge 1-\alpha\right]
\ge \tau.
$$

On the other hand, if a procedure $C_n$ satisfies
(\ref{eq:valid_exp}) for every distribution $P$ on $\mathbb{R}^d$
and every $n$, then
we say that $C_n$ is a
{\em distribution free prediction region}
or has {\em finite sample validity}.

\subsubsection{Efficiency}

We measure the efficiency of $C_n$ in terms of its
closeness to the optimal region $C^{(\alpha)}$.
Recall that if
$P$ has a density $p$
with respect to Lebesgue measure $\mu$,
 then the smallest region with probability content at least $1-\alpha$ is
\begin{equation}
C^{(\alpha)} = \left\{ y:\ p(y) \ge t(\alpha)\right\},
\end{equation}
where
$t(\alpha)$ is given by (\ref{eq:t_alpha}), provided
that the contour $\{y: p(y)=t(\alpha)\}$ has zero measure.
Since $p$ is unknown, $C^{(\alpha)}$ cannot be used
as an estimator but only as a benchmark in evaluating the efficiency.
We define the loss function of $C_n$
by
\begin{equation}
R(C_n) = \mu(C_n\triangle C^{(\alpha)})
\end{equation}
where $\triangle$ denotes the symmetric set difference.
Such loss functions have been used,
for example, by \citet{ChatterjeeP80} and \citet{LiL08} in
nonparametric prediction region estimation and by \citet{Tsybakov97,RigolletV09}
in density level set estimation.
Since,
$$
\mu(C_n\triangle C^{(\alpha)}) =
\mu(C_n) - \mu(C^{(\alpha)}) + 2 \mu(C^{(\alpha)}\backslash C_n) \geq
\mu(C_n) - \mu(C^{(\alpha)})\,,
$$
it follows that the symmetric difference loss
gives an upper bound on the {\em excess loss}
\begin{equation}
{\cal E} (C_n)=\mu(C_n)-\mu(C^{(\alpha)}).
\end{equation}

In \citet{ChatterjeeP80} and \cite{LiL08}, a prediction region $C_n$ is
called asymptotically minimal if
\begin{equation}
\mu(C_n\triangle C^{(\alpha)})\stackrel{P}{\rightarrow}0.
\end{equation}
However, such an asymptotic property does not specify the
rate of convergence.  While convergence rate results
are available for density level sets estimation
\citep[see][for example]{Tsybakov97,RigolletV09,MasonP09},
relatively less
is known about prediction regions until recently
\citep{Cadre06,SamworthW10}.

\subsection{This paper}

In this paper, we propose an efficient and easy to compute
prediction region with finite sample validity
and we study the rate of convergence of its loss.
To be specific,
we construct $C_n$ such that:
\begin{enum}
\item $C_n$ satisfies (\ref{eq:valid_exp}) for all
$P$ and all $n$ under {\em no} assumption other than {\em iid}.
\item  For any $\lambda>0$,
there exist constants $c_1(\lambda,p)$ and $c_2(p)$ independent of $n$, such that
\begin{equation}
  \label{eq:intro_rates}
  \mathbb{P}\left(R(C_n)\ge c_1(\lambda, p)\left(\frac{\log n}{n}\right)^
  {c_2(p)}\right)
  = O(n^{-\lambda}),
\end{equation}
for density $p$ satisfying some standard regularity conditions.
\item For any $y\in \mathbb{R}^d$, the computation cost of evaluating $\mathbf{1}(y\in C_n)$
 is linear in $n$. In other words, checking to see if a point $y$ is in the prediction region, takes linear time.
\end{enum}
The convergence rate of efficiency is described by the term
$(\log n/n)^{c_2(p)}$.  We give explicit formula of constant $c_2(p)$
in terms of the global smoothness and the local behavior of $p$ near the
contour at level $t(\alpha)$.  Its near optimality is discussed
for some important special cases.

Our prediction region is obtained by
combining the idea of conformal prediction
\citep{VovkGS08b}
with density estimation.  We first construct a conformal
prediction region that is closely related to a kernel density
estimator.  The finite sample validity is inherited from
the nature of conformal prediction regions.
Then we show that such a region, whose analytical
form may be intractable, is sandwiched by two kernel density
level sets with
carefully tuned cut-off values.  Therefore the efficiency
of the conformal prediction region can be approximated by those
of the two kernel density level sets.
As a by-product, we obtain a kernel density level set that always
contains the conformal prediction region, and hence also satisfies finite
sample validity.  This observation means that, most of the time, a kernel
density estimator will have near optimal efficiency, finite
sample validity, and even lower computational cost at the same time.
In the efficiency argument, we
refine the rates of convergence for plug-in density level sets
first developed in \cite{Cadre06}, which may be of independent
interest.

Our method involves one tuning parameter which is the bandwidth
in kernel density estimation.  We give two practical data driven approaches
to choose the bandwidth and demonstrate the performance through
simulations.

\subsection{Related work}

Our main technique for constructing prediction regions
is inspired by the {\em conformal prediction} method
\citep{VovkGS08b,ShaferV08}, a general approach
for constructing
distribution free, sequential prediction regions using
exchangeability.
Although in its original appearance,
conformal prediction is applied
to sequential classification and regression problems \citep{VovkNG09},
it is easy to adapt the method
to the prediction task described
in (\ref{eq:valid_exp}).
We describe this general method in Section \ref{sec::vovk} and our
adaptation in Section \ref{sec::kernel}.

In multivariate prediction region estimation, common approaches include
methods based on statistical equivalent blocks \citep{Tukey47,LiL08}
and plug-in density level sets \citep{ChatterjeeP80,Cadre06}.  In methods based on
statistical equivalent blocks, an ordering function taking values in
$\mathbb{R}^1$
is defined and used to order the data points.  Then one-dimensional
tolerance interval methods \citep[e.g.][]{Wilks41} can be applied.  Such
methods usually give accurate coverage but the efficiency is hard
to prove.  In particular, \citet{LiL08} proposed an estimator
using
the multivariate spacing depth as the ordering function.  Such a method
is completely nonparametric, requiring no tuning parameter,
and is adaptive to the shape of
the underlying distribution if the density level sets are convex.
However, this method requires $O(n^{d+1})$ time to compute the indicator
$\mathbf{1}(y\in C_n)$ for any given $y$, which is much
higher comparing to methods based on
plug-in density level sets.  Moreover, it is not clear how this method
performs when the level sets of underlying distribution are not convex.
On the other hand, the methods based on plug-in density level sets
\citep{ChatterjeeP80} gives provable validity and efficiency in
asymptotic sense regardless of the shape of the distribution
\citep{Cadre06}, while requiring only $O(n)$ time to compute the
indicator function.  The potential of such estimators has been reported
empirically in \citet{DiBucchianicoEM01}:
`` ... in principle the method based on density estimation  can perform very
well if a proper bandwidth is chosen, ...''

Our approach, although
originally inspired by conformal prediction, can be viewed as a combination
of the ordering based method and the density based method, where
the ordering function is given by the estimated density.  This agrees
with the simple fact that the best ordering function is just the
density itself.  To the best of our knowledge, this method is the first one with
both finite sample validity and explicit convergence rates.

There are other methods for multivariate prediction regions.  For example,
\citet{DiBucchianicoEM01} proposed to minimize the volume over a
pre-specified class of sets while maintaining a minimum coverage
under the empirical distribution.  This method works well for common
distributions whose level sets can be well approximated by regular
shapes such as ellipsoids and rectangles.
However, its performance depends crucially on the pre-specified
sets which cannot be very rich (must be a Donsker class), and hence cannot
be guaranteed for arbitrary distributions.  Moreover, the minimization
problem may be non-convex and hence computationally intensive.

The rest of this paper is organized as follows.
In Section \ref{sec::vovk}
we introduce conformal prediction.  In Section \ref{sec::kernel}
we describe
a construction of prediction region
by combining conformal prediction with kernel density estimator.
The approximation result (sandwiching lemma) and asymptotic
properties are
also discussed in Section \ref{sec::kernel}.
Practical methods for choosing the bandwidth
are given in Section \ref{sec::bandwidth}
and
simulation results are presented in Section \ref{sec:numerical}.  Some
closing remarks and possible future works are given in Section \ref{sec:conclusion}.
Some technical proofs are given in Section \ref{sec:proof}.

\section{Conformal prediction}
\label{sec::vovk}

We can construct a valid prediction region
using a method from
\citet{VovkGS08b} and \citet{ShaferV08}.
Although their focus was on sequential prediction with covariates, the same
basic idea can be used here.
The method is simple: consider a ``conformity measure''
$\sigma(P,y)$, which measures the ``conformity'' or ``agreement'' of
a point $y$ with respect to a distribution $P$.  Examples of such a
function in the multivariate case include data depth
\citep[see][and references therein]{LiuPS99}, and the density
function. For other
choices of conformity measure, see the book by \citet{VovkGS08b}. Given
an independent sample $Y_1,...,Y_n$ from $P$,
we test the hypothesis that $(Y_1,...,Y_n,Y_{n+1})\stackrel{iid}{\sim}P$
using observation $(Y_1,...,Y_n,y)$
for each $y\in \mathbb{R}^d$ and invert the test.
The test statistic is constructed using
$\sigma$ with $P$ replaced by empirical distribution $\hat P$.

When $(Y_1,...,Y_n,Y_{n+1})$ is a random sample from $P$, let
$\hat{P}_{n+1}$ be the corresponding empirical distribution,
which is symmetric
in the $n+1$ arguments.
Let
$$
\pi_{n+1,i}=
\frac{1}{n+1}\sum_{j=1}^{n+1}\mathbf{1}\left[
\sigma(\hat P_{n+1},Y_j)\le \sigma(\hat P_{n+1},Y_i)\right]\, .
$$
By
symmetry, the sequence of random variables
$\big(\sigma(\hat P_{n+1},Y_i):1\le i\le n+1\big)$ are exchangeable and hence so
are $(\pi_{n+1,i}:1\le i\le n+1)$.
Let
$$
\tilde\alpha = \frac{\lfloor (n+1)\alpha\rfloor}{n+1}.
$$
Note that
$(1 + 1/n)^{-1} \alpha \leq \tilde\alpha \leq \alpha$ and so
$\tilde\alpha \approx \alpha$.
Then,
for any
$\alpha\in (0,1)$,
\begin{equation}\label{eq:basic_conformal}
\mathbb{P}(\pi_{n+1,i}\ge \tilde\alpha)\ge 1-\alpha\,,
\end{equation}
since there are at least $(1-\alpha)(n+1)$ such $\pi_{n+1,i}$'s
satisfying $\pi_{n+1,i}\ge \tilde\alpha$.

Let
\begin{equation}\label{eq:conformal_set_generic}
\hat C^{(\alpha)}(Y_1,...,Y_n)=
\left\{y: \left(\left.\pi_{n+1,n+1}\right|_{Y_{n+1}=y}\right)\ge \tilde\alpha \right\}\,,
\end{equation}
where $\left.\pi_{n+1,n+1}\right|_{Y_{n+1}=y}$ is the random variable
$\pi_{n+1,n+1}$ evaluated at $Y_{n+1}=y$.
Then (\ref{eq:basic_conformal}) implies that
$$
\mathbb{P}\left(Y_{n+1}\in \hat C^{(\alpha)}(Y_1,...,Y_n)\right)\ge 1-\alpha.
$$
Based on the above discussion, any conformity measure $\sigma$ can be used
to construct prediction regions with finite sample validity, with
essentially no assumptions on $P$.  The only requirement is exchangeability
of $\{\pi_{n+1,i}\}$ which is satisfied if the sample is independent.

In this paper we use
\begin{equation}
\sigma(\hat P,y)=\hat{p}(y),
\end{equation}
that is,
a density estimate evaluated
at $y$.  We show that such a choice is closely related to
the plug-in density level set estimator and hence can be proved
to be asymptotically minimal with explicit rate of convergence.

%We conclude this section by expanding on the connection between
%conformal prediction and hypothesis testing.
%Suppose we observe an independent
%sample
%$Y_1,...,Y_n \sim P$.
%We would like to test the null hypothesis
%$H_0:Y_{n+1}=y$.  Consider the test
%that rejects the null if
%$\pi(y) < \tilde\alpha$.
%By (\ref{eq:basic_conformal}), such a test has level $\alpha$.  The
%generic prediction region given in (\ref{eq:conformal_set_generic})
%can be viewed as the inversion of such a test
%and is given by
%$\{y:\  \pi (y) \geq \tilde \alpha\}$.

\section{Kernel density estimation}
\label{sec::kernel}

Let $\mathbf Y=(Y_1,\ldots, Y_n)$.
Define the augmented data
${\sf aug}(\mathbf Y,y) = (Y_1,\ldots, Y_n,y)$.
Let $\hat p$ be some density estimator that is
defined for all $n$.
For example, $\hat p$ could be a parametric estimator
or a nonparametric estimator such as a kernel density estimator.
The particular algorithm we propose is given in Figure \ref{fig::algorithm}.

\begin{figure}
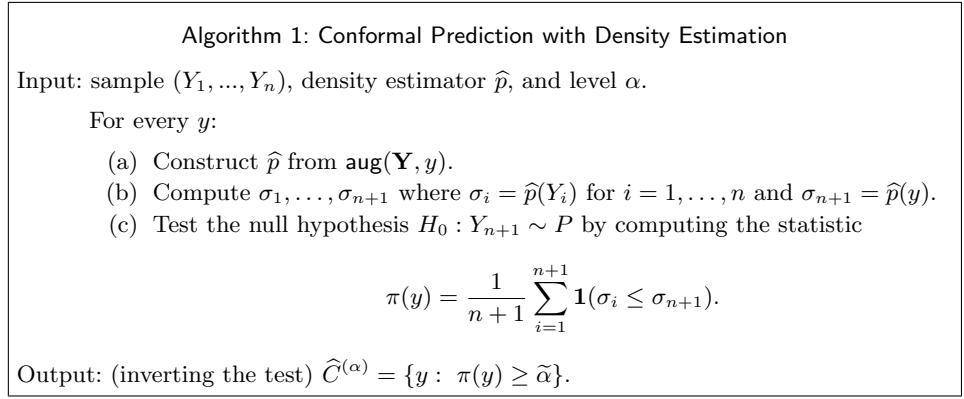

\fbox{\parbox{4.9in}{
\begin{center}{\sf Algorithm 1: Conformal Prediction with Density Estimation}\end{center}
Input: sample $(Y_1,...,Y_n)$, density estimator $\hat p$, and level $\alpha$.
\begin{enum}
\item [] For every $y$:
\begin{enum}
\item Construct $\hat{p}$ from ${\sf aug}(\mathbf Y,y)$.
\item Compute $\sigma_1,\ldots, \sigma_{n+1}$ where
$\sigma_i = \hat{p}(Y_i)$ for $i=1,\ldots, n$ and
$\sigma_{n+1} = \hat p(y)$.
\item Test the null hypothesis
$H_0: Y_{n+1} \sim P$ by computing the statistic
$$ \pi(y) = \frac{1}{n+1}\sum_{i=1}^{n+1} \mathbf{1}(\sigma_i
\leq \sigma_{n+1}). $$
\end{enum}
\end{enum}
Output: (inverting the test)
$\hat{C}^{(\alpha)}= \{ y:\ \pi(y) \ge \tilde \alpha\}$.
}}
\caption{The algorithm for computing the prediction region.}
\label{fig::algorithm}
\end{figure}

Recall that under the null hypothesis
$H_0: (Y_1,...,Y_n,Y_{n+1})\stackrel{iid}{\sim} P$,
the ranks of $\hat{p}(Y_i)$ are exchangeable, and hence
$\mathbb{P}\left(\pi(y) < \tilde\alpha\right) \le \alpha$.
Hence, we have:

\begin{lemma}
Suppose $Y_1,...,Y_{n},Y_{n+1}$ is an independent random sample
from $P$, then
\begin{equation}
\mathbb{P}\left(Y_{n+1}\in \hat C^{(\alpha)}\right) \ge 1-\alpha\,,
\end{equation}
for all probability measures $P$ and hence $\hat C^{(\alpha)}$ is valid.
\end{lemma}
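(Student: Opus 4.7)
The plan is to recognize that this lemma is essentially a direct specialization of the general conformal prediction guarantee from Section~\ref{sec::vovk} to the particular conformity measure $\sigma(\hat P, y) = \hat p(y)$. So the proof proposal is: match the algorithm's construction of $\pi(y)$ to the quantity $\pi_{n+1,n+1}|_{Y_{n+1}=y}$ appearing in equation~(\ref{eq:conformal_set_generic}), and then invoke the exchangeability argument that leads to (\ref{eq:basic_conformal}).

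First I would verify the key structural property that makes the reduction work: the density estimator $\hat p$ is computed from the augmented sample ${\sf aug}(\mathbf Y, y)$, and the standard kernel density estimator (and any reasonable density estimator) is a symmetric function of its input points. Therefore, when we set $y = Y_{n+1}$ and view the augmented sample as $(Y_1,\ldots,Y_{n+1})$, the values $\sigma_i = \hat p(Y_i)$ for $i=1,\ldots,n+1$ are obtained by applying a fixed symmetric functional to an iid sample, hence they form an exchangeable sequence. Consequently the ranks
\[
\pi_{n+1,i} = \frac{1}{n+1}\sum_{j=1}^{n+1}\mathbf{1}\bigl[\sigma_j \le \sigma_i\bigr]
\]
are also exchangeable in $i$.

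Next I would run the pigeonhole step exactly as in Section~\ref{sec::vovk}: among $n+1$ ranks of the form $\pi_{n+1,i}$, at most $\lfloor(n+1)\alpha\rfloor$ of them can be strictly less than $\tilde\alpha = \lfloor(n+1)\alpha\rfloor/(n+1)$, so at least $(n+1) - \lfloor(n+1)\alpha\rfloor \ge (1-\alpha)(n+1)$ of them satisfy $\pi_{n+1,i}\ge\tilde\alpha$. Summing the exchangeable indicators $\mathbf{1}(\pi_{n+1,i}\ge\tilde\alpha)$ and taking expectations yields $\mathbb{P}(\pi_{n+1,n+1}\ge\tilde\alpha)\ge 1-\alpha$ by symmetry.

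Finally I would tie this back to the algorithm's output. By construction of $\pi(y)$ in Figure~\ref{fig::algorithm}, the event $\{Y_{n+1}\in \hat C^{(\alpha)}\}$ is precisely $\{\pi(Y_{n+1})\ge\tilde\alpha\}$, which is the event $\{\pi_{n+1,n+1}\ge\tilde\alpha\}$ evaluated with $Y_{n+1}$ substituted for $y$. The bound above therefore gives $\mathbb{P}(Y_{n+1}\in\hat C^{(\alpha)})\ge 1-\alpha$, uniformly over $P$ and $n$. The only mild obstacle is the bookkeeping point that $\hat p$ must genuinely be symmetric in its inputs; for the kernel density estimator this is immediate, and no additional assumption on $P$ is required since the whole argument rests on exchangeability rather than on any distributional property of $P$.
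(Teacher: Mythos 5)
Your proposal is correct and follows the same route as the paper: the paper proves the lemma by noting that under the null, the ranks of $\hat p(Y_i)$ are exchangeable (because the augmented estimator $\hat p$ is symmetric in the $n+1$ points), and then invokes the counting/pigeonhole argument of Section~\ref{sec::vovk} that gives $\mathbb{P}(\pi_{n+1,n+1}\ge\tilde\alpha)\ge 1-\alpha$. Your added care in checking that the kernel density estimator is a symmetric functional of the augmented sample is exactly the condition the paper relies on.
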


\begin{remark}
Note that the prediction region is valid
(has correct finite sample coverage)
without any smoothness assumptions on $p$.
Indeed, the region is valid even if $P$ does not
have a density.
\end{remark}

\subsection{Conformal prediction with kernel density estimation}
Now we turn to the combination of conformal prediction with kernel density
estimator.
For a given bandwidth $h_n$ and kernel function $K$, let
\begin{equation}\label{eq:kernel_density}
\hat{p}_n(u) = \frac{1}{n}\sum_{i=1}^n \frac{1}{h_n^d}\, K\left(\frac{u-Y_i}{h_n}\right)
\end{equation}
be the usual kernel density estimator.
For now, we focus on a given bandwidth $h_n$.  The theoretical
and practical aspects of choosing $h_n$ will be discussed in Subsection
\ref{sec:asymp} and Section \ref{sec::bandwidth}, respectively.
For any given $y\in\mathbb{R}^d$, let
$Y_{n+1}=y$ and define
the augmented density estimator
\begin{align}
\hat{p}_n^y(u) =& \frac{1}{h_n^d(n+1)} \sum_{i=1}^{n+1} K\left(\frac{u-Y_i}{h_n}\right)
\nonumber\\
 =&
\left(\frac{n}{n+1}\right)\hat p_n(u) + \frac{1}{h_n^d(n+1)}
K\left(\frac{u-y}{h_n}\right).
\end{align}
Now we use the conformity
measure $\sigma(\hat P_{n+1}, Y_i)=\hat{p}_n^y(Y_i)$ and
the p-value is
$$
\pi(y) = \frac{1}{n+1}\sum_{i=1}^{n+1} \mathbf{1}
\left( \hat p^y_n (Y_i) \leq \hat p^y_n(y)\right).
$$
The resulting prediction region given by Algorithm 1 is
$\hat{C}^{(\alpha)} = \{ y:\ \pi(y) \ge \tilde\alpha\}$.

Figure \ref{fig::one-dim} shows a one-dimensional example
of the procedure, which we will investigate in detail later.
The top left plot shows a histogram of some data of sample size
20 from a two-component Gaussian mixture.
The next three plots
(top right, middle left, middle right) show three kernel
density estimators with increasing bandwidth as well as the
conformal prediction regions derived from these estimators
with $\alpha=0.05$.
Every bandwidth leads to
a valid region, but undersmoothing and oversmoothing lead to larger regions.
The bottom left plot shows the Lebesgue measure of the region as a function
of bandwidth. The bottom right plot shows the estimator and prediction region
based on the bandwidth whose corresponding conformal prediction region has
the minimal Lebesque measure.

\begin{figure}
\begin{center}
\includegraphics[scale=.7]{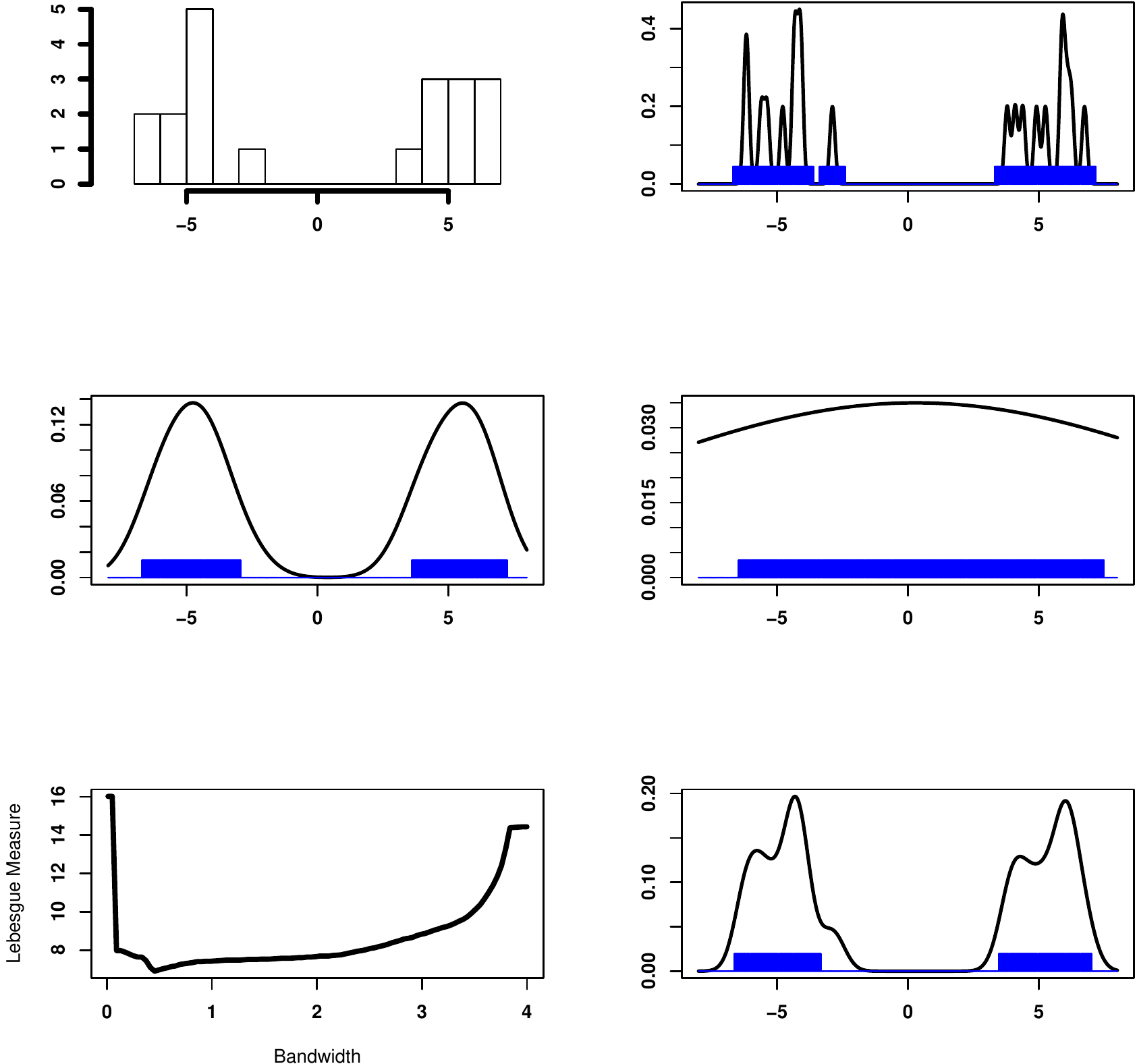}
\end{center}
\caption{Top left: histogram of some data.
Top right, middle left, and middle right show three kernel
density estimators with increasing bandwidth as well as the
conformal prediction regions derived from these estimators.
Bottom left: Lebesgue measure as a function
of bandwidth. Bottom right:  estimator and prediction region from
the bandwidth with smallest prediction region.}
\label{fig::one-dim}
\end{figure}

\subsection{An approximation}\label{subsec:approx}
The conformal prediction
region given by Algorithm 1 is closely related to the kernel density
estimator. In this subsection we further investigate this
connection and state the main approximation result,
the sandwiching lemma, which provides
simple characterization of the conformal prediction region in
terms of
plug-in kernel density level sets.  The sandwiching lemma will also
be useful in the study of efficiency of the conformal prediction regions.

We first introduce some notation.
Define
the upper and lower level sets of density $p$ at level $t$,
respectively:
\begin{equation}
L(t)=\{y:p(y)\ge t\},\quad{\rm and}\quad L^{\ell}(t)=\{y:p(y)\le t\}.
\end{equation}
The corresponding level sets of $\hat p_n$ are denoted $L_n(t)$ and $L_n^\ell(t)$,
respectively.
Let
\begin{equation}
P_n^y=\frac{n}{n+1}P_n+\frac{1}{n+1}\delta_y,
\end{equation}
where $P_n$ is the empirical distribution defined by the sample
$\mathbf Y=(Y_1,...,Y_n)$,
and $\delta_y$ is the point mass distribution at $y$.
Define functions
\begin{align}
G(t)&=P(L^\ell(t)),\nonumber\\
G_n(t)&=P_n(L_n^\ell(t))=n^{-1}\sum_{i=1}^n
\mathbf{1}(\hat{p}_n(Y_i)\le t),\nonumber\\
G^y_n(t)&=P_n^y(\hat{p}^y_n(Y)\le t)\nonumber\\
&=(n+1)^{-1}
\left(\sum_{i=1}^{n} \mathbf{1}(\hat{p}_n^y(Y_i)\le t)+
\mathbf{1}(\hat{p}_n^y(y)\le t)\right).\nonumber
\end{align}
The functions $G$, $G_n$ and $G_n^y$ defined above are the cumulative
distribution function (CDF) of $p(Y)$ and its empirical versions
with sample $\mathbf Y$ and ${\sf aug}(\mathbf Y,y)$, respectively.

By (\ref{eq:conformal_set_generic}) and Algorithm 1,
the conformal prediction region can be written as
\begin{equation}
\hat{C}^{(\alpha)}=\Bigl\{y\in \mathbb{R}^d: G_n^y(\hat{p}_n^y(y))\ge \tilde\alpha\Bigr\}.
\end{equation}
Let $Y_{(1)},\dots,Y_{(n)}$ be the reordered data so that
$\hat{p}_n(Y_{(1)}),\dots,\hat{p}_n(Y_{(n)})$ are in ascending order.
Let $i_{n,\alpha}= \lfloor (n+1)\alpha\rfloor$, and define
the inner and outer sandwiching sets:
$$
L_n^-=L_n\left(\hat{p}_n(Y_{(i_{n,\alpha})})\right)
$$
and
$$
L_n^+=L_n\left(\hat{p}_n(Y_{(i_{n,\alpha})})-(nh^d)^{-1}\psi_K\right)\,,
$$
where
$\psi_K =\sup_{u,u'}|K(u)-K(u')|$.
Then we have the following ``sandwiching''
lemma, whose proof can be found in
Subsection \ref{subsec:proof_sandwich}.

\begin{lemma}[Sandwiching Lemma]   \label{lem:sandwich}
Assume that $||K||_\infty=K(0)$, then
\begin{equation}
L_n^-
\subseteq \hat{C}^{(\alpha)} \subseteq
L_n^+\,.
\end{equation}
\end{lemma}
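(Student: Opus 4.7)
The plan is to exploit the explicit decomposition $\hat p_n^y(u) = \tfrac{n}{n+1}\hat p_n(u) + \tfrac{1}{h_n^d(n+1)}K((u-y)/h_n)$, which shows that the augmented estimator differs from a rescaling of $\hat p_n$ only through the kernel bump centered at the test point $y$. The hypothesis $K(0)=\|K\|_\infty$ says this bump is maximized at $u=y$, so $y$ receives the largest possible boost in $\hat p_n^y$ relative to every sample point. This is the structural fact that drives both inclusions.

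The key step is a two-sided monotonicity comparison between the orderings induced by $\hat p_n^y$ and $\hat p_n$. Taking the difference $\hat p_n^y(y)-\hat p_n^y(Y_j)=\tfrac{n}{n+1}[\hat p_n(y)-\hat p_n(Y_j)]+\tfrac{1}{h_n^d(n+1)}[K(0)-K((Y_j-y)/h_n)]$ and using $0\le K(0)-K((Y_j-y)/h_n)\le \psi_K$ yields two implications I would establish first: (i) $\hat p_n(y)\ge \hat p_n(Y_j)$ forces $\hat p_n^y(y)\ge \hat p_n^y(Y_j)$, and (ii) $\hat p_n^y(y)\ge \hat p_n^y(Y_j)$ forces $\hat p_n(Y_j)\le \hat p_n(y)+\psi_K/(nh_n^d)$.

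Both inclusions then follow by plugging each direction into $\pi(y)=(n+1)^{-1}\sum_{i=1}^{n+1}\mathbf 1(\hat p_n^y(Y_i)\le \hat p_n^y(y))$ and comparing to $\tilde\alpha=i_{n,\alpha}/(n+1)$. For the inner inclusion, $y\in L_n^-$ means $\hat p_n(y)\ge \hat p_n(Y_{(j)})$ for every $j\le i_{n,\alpha}$; applying (i) to each of these order statistics and adding the trivial self-indicator from $Y_{n+1}=y$ gives $(n+1)\pi(y)\ge i_{n,\alpha}+1$, hence $y\in\hat C^{(\alpha)}$. For the outer inclusion I would argue contrapositively: if $\hat p_n(y)+\psi_K/(nh_n^d)<\hat p_n(Y_{(i_{n,\alpha})})$, then (ii) forces every $Y_j$ contributing to the count to satisfy $\hat p_n(Y_j)<\hat p_n(Y_{(i_{n,\alpha})})$; at most $i_{n,\alpha}-1$ sample points lie strictly below the $i_{n,\alpha}$-th order statistic, so $(n+1)\pi(y)\le i_{n,\alpha}$ and $y\notin \hat C^{(\alpha)}$.

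The main obstacle is the careful bookkeeping at the boundary. The assumption $K(0)=\|K\|_\infty$ is precisely what removes all slack in (i) and caps the slack in (ii) by $\psi_K/(nh_n^d)$, matching exactly the gap between the thresholds defining $L_n^\pm$. The $+1$ from the self-term in $\pi(y)$ together with the choice $\tilde\alpha=\lfloor(n+1)\alpha\rfloor/(n+1)$ must be tracked precisely, so that the count $i_{n,\alpha}$ aligns with ``the $i_{n,\alpha}$-th order statistic'' on the inner side and ``strictly below it'' on the outer side; ties among the $\hat p_n(Y_j)$ at the boundary level require care in reading off the order-statistic count.
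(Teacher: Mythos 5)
Your proof follows essentially the same route as the paper's: the same decomposition of $\hat p_n^y$ into $\tfrac{n}{n+1}\hat p_n$ plus a kernel bump, the same observation that $K(0)=\|K\|_\infty$ makes the bump largest at $y$, and the same order-statistic counting. Packaging the computation as the two implications (i) and (ii) is a cosmetic reorganization of what the paper does directly by signing $\hat p_n^y(y)-\hat p_n^y(Y_{(i)})$ for $i\le i_{n,\alpha}$ and $i\ge i_{n,\alpha}$, respectively.

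One detail is worth flagging. Your count on the outer side gives $(n+1)\pi(y)\le i_{n,\alpha}$, i.e., $\pi(y)\le\tilde\alpha$; but $\hat C^{(\alpha)}=\{y:\pi(y)\ge\tilde\alpha\}$ is a closed condition, so excluding $y$ requires the \emph{strict} inequality $\pi(y)<\tilde\alpha$. Your bound is in fact the correct one: for $j\ge i_{n,\alpha}$ you obtain the strict inequality $\hat p_n^y(Y_{(j)})>\hat p_n^y(y)$, which leaves at most $i_{n,\alpha}-1$ contributing data points plus the self-indicator that $\pi(y)$ always includes, for a total of $i_{n,\alpha}$. The paper's own proof asserts $G_n^y(\hat p_n^y(y))\le (i_{n,\alpha}-1)/(n+1)<\tilde\alpha$, which appears to omit the self-indicator that its definition of $G_n^y$ explicitly contains; so the strictness you need is inherited as a gap from the source rather than introduced by you. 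As written, however, the step from $(n+1)\pi(y)\le i_{n,\alpha}$ to ``$y\notin\hat C^{(\alpha)}$'' is not a valid implication: the boundary case $\pi(y)=\tilde\alpha$ is left open. You would need to either sharpen the count to $(n+1)\pi(y)\le i_{n,\alpha}-1$ (which the present argument does not give), argue that the tie case is impossible or negligible, or slightly enlarge the offset defining $L_n^+$.
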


According to the sandwiching lemma, $L_n^+$ also guarantees
distribution free finite sample coverage
and it is easier to analyze.
The inner region, $L_n^-$, which is not much smaller than $L_n^+$ when $n$
is large, generally does not have finite sample validity.  We confirm
this through simulations in Section \ref{sec:numerical}.
Next we investigate the efficiency of these prediction regions.

\subsection{Asymptotic properties}\label{sec:asymp}
In this subsection we prove asymptotic efficiency of $\hat C^{(\alpha)}$ and the sandwiching sets
in terms of the convergence rates of their loss.

Recall that the optimal prediction region at level $1-\alpha$ can be
written as
\begin{equation}
C^{(\alpha)}=L(t^{(\alpha)})=\{y: p(y)\ge t^{(\alpha)}\},
\end{equation}
where $t^{(\alpha)}$ is the cut-off value of the
density function so that the probability mass in the lower level set is
exactly $\alpha$:
\begin{equation}
G(t^{(\alpha)})=\mathbb{P}(p(Y)\le t^{(\alpha)})=\alpha.
\end{equation}
This holds if we
assume $G$ is continuous at $t^{(\alpha)}$ so that the above equation
implies $\mathbb{P}(p(Y)\ge t^{(\alpha)})=1-\alpha$.  This is equivalent
to assuming that the contour of $p$ at value $t^{(\alpha)}$,
$\{y:p(y)=t^{(\alpha)}\}$, has zero measure
under $P$.

The inner and outer sandwiching sets $L_n^-$ and $L_n^+$
are plug-in estimators of density level sets of the form:
\begin{equation}
L_n(t_n^{(\alpha)})=\{y:\hat{p}_n(y)\ge t_n^{(\alpha)}\},
\end{equation}
where
$t_n^{(\alpha)}=\hat p_n(Y_{(i_{n,\alpha})})$ for the inner set $L_n^-$ and
$t_n^{(\alpha)}=\hat p_n(Y_{(i_{n,\alpha})})-(nh_n^d))^{-1}\psi_K$
for the outer set $L_n^+$.
Here
we can view $t_n^{(\alpha)}$ as an estimate
of $t^{(\alpha)}$.  In \cite{CadrePP09} it is shown that, under
regularity conditions of the density $p$, the plug-in estimators
$t_n^{(\alpha)}$ and $L_n(t_n^{(\alpha)})$ using kernel density
estimator are consistent with convergence rate $1/\sqrt{nh_n^d}$
for a range of $h_n$.  Here, we refine the results
using a set of slightly modified conditions.

Intuitively speaking, for any density estimator $\hat p_n$
and cut-off values $t_n^{(\alpha)}$, the plug-in density level set
$L_n(t_n^{(\alpha)})$ is an accurate estimator of $L(t^{(\alpha)})$
 if:
\begin{enumerate}
  \item The estimated density function, $\hat p_n$, is close to the
  true density $p$.
  \item The true density is not too flat around level $t^{(\alpha)}$.
  \item The estimated cut-off value $t_n^{(\alpha)}$ is an accurate estimate of
  $t^{(\alpha)}$.
\end{enumerate}
The first condition has been extensively studied in the literature
of nonparametric density estimation and sufficient conditions of
convergence for kernel density estimators in various forms have been
established.  The second condition is more specific for density level
set estimation.  A common condition is the $\gamma$-exponent at
level $t^{(\alpha)}$, which is first introduced by \cite{Polonik95}
and has been used by many others
\citep[see][for example]{Tsybakov97,RigolletV09}.  The third condition is
somewhat opposite to the second one.  It essentially requires that
the density function cannot be too steep near the true cut-off value.
This turns out to be a natural condition whenever the density has
bounded derivatives near the contour.  We formalize this condition
through a ``modified $\gamma$-exponent condition'' which is detailed
in Section \ref{sec:gamma_expo}.

\subsubsection{H\"{o}lder Classes of Densities}

To study the efficiency of the prediction region,
we need some smoothness
condition on $p$.  The H\"{o}lder class is a popular smoothness condition in nonparametric
inferences \citep[Section 1.2]{Tsybakov09}.  Here we use the version
given in \cite{RigolletV09}. %The corresponding condition used in this
%paper is slightly different than the standard $\gamma$-exponent
%conditions.

Let $s=(s_1,...,s_d)$ be a $d$-tuple of non-negative integers and
$|s|=s_1+...+s_d$.  For any $x\in \mathbb{R}^d$, let
$x^s=x_1^{s_1}\cdots x_d^{s_d}$ and $D^s$ be the differential
operator:
 $$D^s f=\frac{\partial^{|s|}f}{\partial x_1^{s_1}\cdots
 \partial x_d^{s_d}}(x_1,...,x_d).$$  Given $\beta>0$,
 for any functions $f$ that are $\lfloor\beta\rfloor$ times
 differentiable, denote its Taylor expansion of degree
 $\lfloor\beta\rfloor$ at $x_0$ by
 $$f^{(\beta)}_{x_0}(x)=\sum_{|s|\le \beta}\frac{(x-x_0)^s}{s_1!\cdots s_d!}D^sf(x_0).$$

\begin{definition}[H\"{o}lder class]
For constants $\beta >0$, $L>0$, define the H\"{o}lder class
$\Sigma(\beta, L)$ to be the set of $\lfloor \beta\rfloor$-times
differentiable functions on $\mathbb{R}^d$ such that,
\begin{equation}
|f(x)-f^{(\beta)}_{x_0}(x)|\le L||x-x_0||^{\beta}.
\end{equation}
\end{definition}

\subsubsection{The $\gamma$-exponent condition}\label{sec:gamma_expo}

For a density function $p$, and a level $t\in (0,||p||_\infty)$, the
usual $\gamma$-exponent condition requires that there exists an
$\epsilon_0>0$ and $c_1>0$ such that
\begin{equation}\label{eq:gamma_exp}
\mu(\{y:t<p(y)\le t+\epsilon\})\le c_1\epsilon^\gamma,~\forall
\epsilon\in(0,\epsilon_0).\end{equation}
Condition (\ref{eq:gamma_exp}) is essentially requiring that the density
$p(y)$ increases roughly at rate $\epsilon^{1/\gamma}$ when $y$ moves
away from the contour by an $\epsilon$ distance.
As a result, a larger value of $\gamma$
corresponds to a faster change of the density $p$ when moving away
from the contour, hence it is easier to estimate the density level set.
In this paper, we consider the modified $\gamma$-exponent condition:

\begin{definition}[Modified $\gamma$-exponent condition]\label{def:modif_gamma}
We say a density function $p$ satisfies the modified $\gamma$-exponent
condition at level $t$, if there exist constants $\epsilon_0>0$ and
$c_1,c_2>0$, such that
\begin{equation}\label{eq:modif_gamma_exp}
c_1\le
\frac{P(\{y:t_-\le p(y)\le t_+\})}{(t_+-t_-)^\gamma}\le c_2,
~~\forall ~t-\epsilon_0\le t_-<t_+\le t+\epsilon_0.\end{equation}
\end{definition}

The modified $\gamma$-exponent condition differs from the original definition in three aspects:

\begin{enumerate}\item First, it allows both sides of the interval to
change within a neighborhood of $t$, which is stronger than
(\ref{eq:gamma_exp}). It does not allow the contour at level $t$ to
have positive measure.  We note that if the contour at level  $t$ has
positive measure, then the estimated level set has at least a constant 
loss unless the cut-off value is estimated without error.
 \item Second, it does not only require an upper bound on the measure,
 but also a lower bound.  Since the upper bound indicates that the
 density cannot be too flat around the contour, the lower bound does
 not allow the density to be too steep. This condition implies that the
 estimated cut-off value is close to the truth.  It usually holds when
 the density is smooth enough around the contour.  For example, when the
 contour at level $t$ is smooth and the density $p$ satisfies
 $|p(y)-t|\approx \delta^{1/\gamma}$ for all $y$ that is $\delta$ away from
 the contour
 and all $\delta$ small enough \citep{Tsybakov97}.
 \item Moreover, in the modified condition, we use the measure induced
 by $p$, rather than the Lebesgue measure.  This is a minor difference
 since we always have, for all $t-\epsilon_0\le t_-<t_+\le t+\epsilon_0$,
$$t-\epsilon_0\le\frac{P(\{y:t_-<p(y)\le t_+\})}
{\mu(\{y:t_-<p(y)\le t_+\})}\le t+\epsilon_0.$$
\end{enumerate}

\subsubsection{Conditions on the Kernel}

A standard condition on the kernel
is the notion of $\beta$-valid kernels.
\begin{definition}[$\beta$-valid kernel]
  \label{def:valid_kernel} For any $\beta>0$,
  a function $K:\mathbb{R}^d\mapsto \mathbb{R}^1$ is a $\beta$-valid
  kernel if
  \begin{enumerate} \item $K$ is supported on $[-1,1]^d$.
  \item $\int K=1$.
  \item $\int |K|^r <\infty$, all $r\ge 1$.
  \item $\int y^s K(y)dy=0$ for all $1\le |s|\le \beta$.
  \end{enumerate}
\end{definition}
In the literature, $\beta$-valid kernels are usually used with H\"{o}lder
class of functions to derive fast rate of convergence.  The existence of
univariate $\beta$-valid kernels can be found in
\cite[Section 1.2]{Tsybakov09}.  A multivariate $\beta$-valid kernel can
be obtained by taking direct product of univariate $\beta$-valid kernels.

\subsubsection{Asymptotic properties of estimated density level set}

Consider the following assumptions:

\vspace{0.2cm}

\noindent\textbf{Assumption A1:}
\begin{enum}
  \item [(a)] The density function $p\in \mathcal P(\beta, L)$, where
  $\mathcal P(\beta,L)$ is the class of all density functions that are
  in the H\"{o}lder class $\Sigma(\beta,L)$.
  \item [(b)] The density $p$ satisfies the modified $\gamma$-exponent
  condition at level $t^{(\alpha)}$.
  \item [(c)] The density function $p$ is uniformly bounded by a
  constant $\bar{L}$.
\end{enum}

\noindent\textbf{Assumption A2:}
The bandwidth satisfies
\begin{equation}
h_n\asymp \left(\frac{\log n}{n}\right)^{\frac{1}{2\beta+d}}.
\end{equation}

\noindent\textbf{Assumption A3:}
  The kernel $K$ is $\beta$-valid and $||K||_\infty=K(0)$.

\vspace{0.5cm}

These assumptions extend those in
\citep{CadrePP09},
where $\beta=1$ is considered.  Also A1(b)
considered here is a local version.
%Let
%\begin{equation}\label{eq:eps}
%\epsilon_n=
%\left(\frac{\log n}{n}\right)^{\frac{\beta}{2\beta+d}}
%\asymp\sqrt{\frac{\log n}{nh^d}}.
%\end{equation}
%The following theorem is analogous to
%\citet[Corollary 2.2]{CadrePP09}.

The next theorem states the quality of
cut-off values used in the sandwiching sets
$L_n^-$ and $L_n^+$.
\begin{theorem}\label{thm:rate_of_level}
Let $t_n^{(\alpha)}=\hat p_n(Y_{(i_{n,\alpha})})$, where $\hat p_n$
is the kernel density estimator given by eq. (\ref{eq:kernel_density}), and $Y_{(i)}$ and $i_{n,\alpha}$ are defined
as in Section \ref{subsec:approx}.
Under assumptions A1-A3, for any $\lambda>0$, there
exist constants $A_\lambda$, $A_\lambda'$ depending
only on $p$, $K$ and $\alpha$, such that
\begin{equation}\label{eq:rate_of_level}
\mathbb{P}\left(|t_n^{(\alpha)}-t^{(\alpha)}|\ge
A_\lambda \left(\frac{\log n}{n}\right)^{\frac{\beta}{2\beta+d}}
+ A_\lambda'
\left(\frac{\log n}{n}\right)^{\frac{1}{2\gamma}}
\right)=O(n^{-\lambda}).
\end{equation}
\end{theorem}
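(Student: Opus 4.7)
\textbf{Proof plan for Theorem \ref{thm:rate_of_level}.}

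The plan is to relate $t_n^{(\alpha)}$ to an empirical quantile problem by passing through two intermediate empirical CDFs. Define
$$G_n^\star(t)=\tfrac{1}{n}\sum_{i=1}^n\mathbf{1}(p(Y_i)\le t),\qquad \hat G_n(t)=\tfrac{1}{n}\sum_{i=1}^n\mathbf{1}(\hat p_n(Y_i)\le t),$$
so that by construction $\hat G_n(t_n^{(\alpha)})=i_{n,\alpha}/n=\alpha+O(1/n)$, while $G(t^{(\alpha)})=\alpha$. The core identity I will use is
$$G(t_n^{(\alpha)})-G(t^{(\alpha)})=\bigl[G(t_n^{(\alpha)})-G_n^\star(t_n^{(\alpha)})\bigr]+\bigl[G_n^\star(t_n^{(\alpha)})-\hat G_n(t_n^{(\alpha)})\bigr]+\bigl[\hat G_n(t_n^{(\alpha)})-\alpha\bigr].$$
The third bracket is deterministic and $O(1/n)$.

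First I will control the sup-norm error of the kernel estimator: under Assumptions A1(a,c), A2, A3, a standard Giné–Guillou / Einmahl–Mason uniform deviation bound gives, for every $\lambda>0$,
$$\mathbb{P}\Bigl(\|\hat p_n-p\|_\infty\ge A_\lambda\,(\log n/n)^{\beta/(2\beta+d)}\Bigr)=O(n^{-\lambda}),$$
for some constant $A_\lambda$ depending only on $\beta,L,\bar L,K$. Call this deviation $\delta_n$. Simultaneously, the DKW inequality applied to $p(Y_1),\dots,p(Y_n)$ yields
$$\mathbb{P}\Bigl(\sup_t|G_n^\star(t)-G(t)|\ge A_\lambda'(\log n/n)^{1/2}\Bigr)=O(n^{-\lambda}).$$
On the intersection of these two high probability events, the first bracket above is at most $A_\lambda'(\log n/n)^{1/2}$.

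Next I control $|G_n^\star(t)-\hat G_n(t)|$. For any $t$, the indicators can differ only at those $Y_i$ for which $p(Y_i)$ and $\hat p_n(Y_i)$ lie on opposite sides of $t$, and this forces $|p(Y_i)-t|\le\delta_n$. Hence
$$|\hat G_n(t)-G_n^\star(t)|\le G_n^\star(t+\delta_n)-G_n^\star(t-\delta_n)\le \bigl[G(t+\delta_n)-G(t-\delta_n)\bigr]+2\sup_s|G_n^\star(s)-G(s)|.$$
For $t$ in an $\epsilon_0$-neighborhood of $t^{(\alpha)}$, Assumption A1(b) (the modified $\gamma$-exponent condition) gives $G(t+\delta_n)-G(t-\delta_n)\le c_2(2\delta_n)^\gamma$. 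Combining the three brackets, and noting that $t_n^{(\alpha)}$ lies in that neighborhood on the good event (see below),
$$|G(t_n^{(\alpha)})-G(t^{(\alpha)})|\le C\bigl[\delta_n^\gamma+(\log n/n)^{1/2}+1/n\bigr].$$

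Finally, the lower-bound half of A1(b) provides the inversion: for $t_n^{(\alpha)}\in[t^{(\alpha)}-\epsilon_0,t^{(\alpha)}+\epsilon_0]$,
$$c_1|t_n^{(\alpha)}-t^{(\alpha)}|^\gamma\le |G(t_n^{(\alpha)})-G(t^{(\alpha)})|,$$
so $|t_n^{(\alpha)}-t^{(\alpha)}|\le C'[\delta_n+(\log n/n)^{1/(2\gamma)}]$, which plugging in $\delta_n\asymp(\log n/n)^{\beta/(2\beta+d)}$ matches \eqref{eq:rate_of_level}.

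The one step that requires care is the \emph{localization} needed to invoke A1(b): the inequalities above only make sense when $t_n^{(\alpha)}$ is already in the $\epsilon_0$-neighborhood of $t^{(\alpha)}$. I would bootstrap this by first showing that $|\hat G_n(t^{(\alpha)}\pm\epsilon_0/2)-G(t^{(\alpha)}\pm\epsilon_0/2)|\le c_1(\epsilon_0/2)^\gamma/2$ with probability $1-O(n^{-\lambda})$ (using the same three-bracket decomposition but with a constant gap), which forces $t_n^{(\alpha)}\in[t^{(\alpha)}-\epsilon_0,t^{(\alpha)}+\epsilon_0]$ on that event; then I apply the refined argument above. The main obstacle is to obtain the deviation bounds with the correct \emph{exponential} tails (not merely $O_P$), which relies on the Giné–Guillou bound for the kernel estimator plus DKW; everything else is bookkeeping with the modified $\gamma$-exponent condition.
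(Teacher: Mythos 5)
Your proposal is correct and rests on the same three ingredients as the paper's proof---a sup-norm kernel bound $R_n = \|\hat p_n - p\|_\infty$ (via Gin\'e--Guillou), a uniform empirical-CDF bound $V_n = \sup_t |G_n^\star(t)-G(t)|$, and the modified $\gamma$-exponent condition---but you organize the bookkeeping differently. Where you decompose $G(t_n^{(\alpha)}) - G(t^{(\alpha)})$ into three brackets, the paper instead establishes a sandwich $G(t-R_n) - V_n \le G_n(t) \le G(t+R_n) + V_n$ for all $t$, and then shows directly (using the lower half of A1(b)) that $G_n\bigl(t^{(\alpha_n)} - W_n\bigr) < \alpha_n < G_n\bigl(t^{(\alpha_n)} + W_n\bigr)$ where $W_n = R_n + (2V_n/c_1)^{1/\gamma}$, so that monotonicity of $G_n$ immediately brackets the empirical quantile $t_n^{(\alpha)} = G_n^{-1}(\alpha_n)$ in $[t^{(\alpha_n)} - W_n, t^{(\alpha_n)} + W_n]$.

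The practical payoff of the paper's bracketing is precisely the step you flag as delicate: because it only ever applies the $\gamma$-exponent to intervals centered at $t^{(\alpha_n)}$ (which is deterministically within $(c_1 n)^{-1/\gamma}$ of $t^{(\alpha)}$) with radius $(2V_n/c_1)^{1/\gamma}$, the localization of $t_n^{(\alpha)}$ falls out of the argument rather than needing to be established in advance. Your three-bracket route, by contrast, evaluates the $\gamma$-exponent at the \emph{random} point $t_n^{(\alpha)}$, so the bootstrapping step (showing $t_n^{(\alpha)}$ lands in the $\epsilon_0$-neighborhood on the good event) is genuinely required, as you note. Your bootstrap sketch---controlling $\hat G_n$ at $t^{(\alpha)}\pm\epsilon_0/2$ with a constant gap---does work, but it adds a preliminary pass that the paper's ordering of the argument avoids. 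Two minor points in your favor: using DKW for $G_n^\star$ (the empirical CDF of the one-dimensional $p(Y_i)$'s) is cleaner and more elementary than the paper's generic VC bound for $\{L^\ell(t): t>0\}$, and your final inversion correctly recovers the rate after splitting $[\delta_n^\gamma + (\log n/n)^{1/2} + n^{-1}]^{1/\gamma}$ termwise up to a constant. In short: correct, equivalent in substance, slightly longer in execution because of the extra localization step.
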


We give the proof of Theorem
\ref{thm:rate_of_level} in Section \ref{subsec:proof_rate_of_level}.
Theorem \ref{thm:rate_of_level} is
useful for establishing the convergence of the
corresponding level set.
Observing that $(nh_n^d)^{-1}=o((\log n/n)^{\beta/(2\beta+d)})$,
it follows
immediately that the cut-off value used in $L_n^+$
also satisfies (\ref{eq:rate_of_level}).
The next theorem gives the rate of convergence for
plug-in level set estimators when the cut-off value
satisfies (\ref{eq:rate_of_level}).

\begin{theorem}  \label{thm:L_n+}
Let $t_n^{(\alpha)}$ be a random sequence which satisfies (\ref{eq:rate_of_level}).
Under A1-A3, for any $\lambda>0$, there exist constants $B_\lambda$, $B_\lambda'$
depending on $p$, $K$ and $\alpha$ only, such that
\begin{align}
\mathbb{P}\left(\mu(L_n(t_n^{(\alpha)})\triangle C^{(\alpha)})\ge
B_\lambda\left(\frac{\log n}{n}\right)^{\frac{\beta\gamma}{2\beta+d}}
+B_\lambda'\left(\frac{\log n}{n}\right)^{\frac{1}{2}}\right)
=O(n^{-\lambda})\,.\nonumber
\end{align}
\end{theorem}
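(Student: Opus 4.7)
The plan is to reduce the symmetric difference $L_n(t_n^{(\alpha)}) \triangle C^{(\alpha)}$ to a thin ``band'' around the contour $\{p = t^{(\alpha)}\}$ whose width is controlled by two quantities: the uniform error $\eta_n := \|\hat p_n - p\|_\infty$ of the kernel density estimator, and the cut-off error $\delta_n := |t_n^{(\alpha)} - t^{(\alpha)}|$. Specifically, if $y \in L_n(t_n^{(\alpha)}) \setminus L(t^{(\alpha)})$, then $\hat p_n(y) \ge t_n^{(\alpha)}$ and $p(y) < t^{(\alpha)}$, so $p(y) > \hat p_n(y) - \eta_n \ge t_n^{(\alpha)} - \eta_n \ge t^{(\alpha)} - \eta_n - \delta_n$; the reverse inclusion is symmetric. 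Hence
\begin{equation}
L_n(t_n^{(\alpha)}) \triangle C^{(\alpha)} \subseteq \bigl\{ y : |p(y) - t^{(\alpha)}| \le \eta_n + \delta_n \bigr\}
\end{equation}
on the event $\{\|\hat p_n - p\|_\infty \le \eta_n\} \cap \{|t_n^{(\alpha)} - t^{(\alpha)}| \le \delta_n\}$.

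Next I would invoke the upper half of the modified $\gamma$-exponent condition (Assumption A1(b)) to bound the $P$-measure of this band by $c_2(2(\eta_n+\delta_n))^\gamma$, provided $\eta_n + \delta_n \le \epsilon_0$, which occurs for all $n$ large enough. Converting to Lebesgue measure loses only the factor $1/(t^{(\alpha)} - \epsilon_0)$ because $p$ is essentially constant ($\approx t^{(\alpha)}$) on the band, as noted in item~3 following Definition \ref{def:modif_gamma}. Thus on the good event,
\begin{equation}
\mu\bigl(L_n(t_n^{(\alpha)}) \triangle C^{(\alpha)}\bigr) \le C (\eta_n + \delta_n)^\gamma \le C'\bigl( \eta_n^\gamma + \delta_n^\gamma \bigr).
\end{equation}

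It remains to choose $\eta_n$ and $\delta_n$ so that the good event has probability $1 - O(n^{-\lambda})$ and to evaluate the resulting rate. For $\delta_n$, Theorem \ref{thm:rate_of_level} already supplies
\begin{equation}
\delta_n \asymp (\log n / n)^{\beta/(2\beta+d)} + (\log n/n)^{1/(2\gamma)}
\end{equation}
with the required exceptional probability. For $\eta_n$, I would use the standard uniform deviation bound for kernel density estimators with a $\beta$-valid kernel and bandwidth $h_n \asymp (\log n/n)^{1/(2\beta+d)}$: the bias is $O(h_n^\beta)$ by the Hölder assumption, and the stochastic fluctuation is $O(\sqrt{\log n/(n h_n^d)})$ by a Talagrand or VC-type concentration bound, giving
\begin{equation}
\eta_n \lesssim (\log n / n)^{\beta/(2\beta+d)}
\end{equation}
with probability $1 - O(n^{-\lambda})$ for any preset $\lambda$. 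A union bound on the two deviation events completes the control of the good event.

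Finally I would assemble the pieces: $\eta_n^\gamma \lesssim (\log n/n)^{\beta\gamma/(2\beta+d)}$ and $\delta_n^\gamma \lesssim (\log n/n)^{\beta\gamma/(2\beta+d)} + (\log n/n)^{1/2}$, matching the two terms stated in the theorem after absorbing constants into $B_\lambda$ and $B_\lambda'$. The main technical point (rather than a true obstacle) is ensuring that the uniform concentration bound for $\hat p_n$ holds with polynomially small exceptional probability uniformly over the relevant region; this is where the compact support of $K$ and the boundedness of $p$ from A1(c) let one apply a standard covering plus Bernstein argument. Everything else is bookkeeping: verifying $\eta_n + \delta_n < \epsilon_0$ for large $n$ so the $\gamma$-exponent condition applies, and converting $P$ to $\mu$ via the near-constancy of $p$ on the band.
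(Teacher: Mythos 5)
Your proof is correct and follows essentially the same path as the paper's. Both arguments contain $L_n(t_n^{(\alpha)})\triangle C^{(\alpha)}$ in the band $\bigl\{\,|p - t^{(\alpha)}| \le \|\hat p_n - p\|_\infty + |t_n^{(\alpha)} - t^{(\alpha)}|\,\bigr\}$, bound its $P$-measure via the upper half of the modified $\gamma$-exponent condition, convert to Lebesgue measure by dividing by a lower bound on $p$ over the band (valid once the band width is below $\epsilon_0\wedge t^{(\alpha)}/2$), and control the two width terms using Theorem~\ref{thm:rate_of_level} together with the sup-norm deviation bound for $\hat p_n$ (the quantity $R_n$ in Lemma~\ref{lem:V_n}), so only the bookkeeping of constants differs.
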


By Theorem \ref{thm:rate_of_level}, the cut-off values
used in  $L_n^-$ and $L_n^+$ both satisfy
(\ref{eq:rate_of_level}), so the convergence rate in
Theorem \ref{thm:L_n+} holds
for $L_n^-$ and $L_n^+$.
By Lemma \ref{lem:sandwich}, it also holds for $\hat C^{(\alpha)}$.

\begin{corollary}
\label{cor:rates}
Under A1-A3, for any $\lambda>0$, there exists constant
$B_\lambda$, $B_\lambda'$
depending on $p$, $K$ and $\alpha$ only, such that, for
all $\hat C\in \{\hat C^{(\alpha)},L_n^-,L_n^+\}$,
\begin{equation}\label{eq:final_rate}
\mathbb{P}\left(\mu(\hat C\triangle C^{(\alpha)})\ge
B_\lambda\left(\frac{\log n}{n}\right)^{\frac{\beta\gamma}{2\beta+d}}
+B_\lambda'\left(\frac{\log n}{n}\right)^{\frac{1}{2}}\right)=O(n^{-\lambda}).
\end{equation}
\end{corollary}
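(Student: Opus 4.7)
The plan is to chain together the three ingredients already established in the excerpt: the sandwiching lemma (Lemma \ref{lem:sandwich}), the quantile estimation rate (Theorem \ref{thm:rate_of_level}), and the plug-in level-set rate (Theorem \ref{thm:L_n+}). No new probabilistic machinery should be needed; the corollary is essentially a bookkeeping consequence together with a triangle inequality argument.

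First I would handle $L_n^-$, which uses cut-off $t_n^{(\alpha)} = \hat{p}_n(Y_{(i_{n,\alpha})})$. Theorem \ref{thm:rate_of_level} gives directly that this cut-off satisfies the deviation bound \eqref{eq:rate_of_level}, so plugging into Theorem \ref{thm:L_n+} produces the stated rate for $\mu(L_n^- \triangle C^{(\alpha)})$. Next I would handle $L_n^+$, whose cut-off is $\hat{p}_n(Y_{(i_{n,\alpha})}) - (nh_n^d)^{-1}\psi_K$. The only extra step is to verify that the deterministic offset $(nh_n^d)^{-1}\psi_K$ is negligible compared to the rate in \eqref{eq:rate_of_level}. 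Under Assumption A2, $h_n^d \asymp (\log n/n)^{d/(2\beta+d)}$, hence
\begin{equation*}
(nh_n^d)^{-1} \asymp n^{-2\beta/(2\beta+d)} (\log n)^{-d/(2\beta+d)} = o\!\left(\left(\tfrac{\log n}{n}\right)^{\beta/(2\beta+d)}\right),
\end{equation*}
so the offset can be absorbed into the constant $A_\lambda$ of \eqref{eq:rate_of_level}. Theorem \ref{thm:L_n+} then applies to $L_n^+$ as well.

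It remains to transfer the bound to the conformal region $\hat C^{(\alpha)}$. Here the sandwiching lemma $L_n^- \subseteq \hat C^{(\alpha)} \subseteq L_n^+$ is the key: for any set $B$ with $L_n^- \subseteq B \subseteq L_n^+$, one has $B \triangle C^{(\alpha)} \subseteq (L_n^- \triangle C^{(\alpha)}) \cup (L_n^+ \triangle C^{(\alpha)})$, because a point in $B \setminus C^{(\alpha)}$ must lie in $L_n^+ \setminus C^{(\alpha)}$, and a point in $C^{(\alpha)} \setminus B$ must lie in $C^{(\alpha)} \setminus L_n^-$. Consequently
\begin{equation*}
\mu\!\left(\hat C^{(\alpha)}\triangle C^{(\alpha)}\right) \le \mu\!\left(L_n^-\triangle C^{(\alpha)}\right) + \mu\!\left(L_n^+\triangle C^{(\alpha)}\right),
\end{equation*}
so a union bound over the two events from Theorem \ref{thm:L_n+} (with constants adjusted by a factor of two) delivers the rate for $\hat C^{(\alpha)}$ and thus for all three choices in the statement.

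I do not expect any serious obstacle; the one place that requires a moment of care is checking that the bandwidth-induced offset in the cut-off for $L_n^+$ is strictly lower order than the rate in \eqref{eq:rate_of_level}, which follows from Assumption A2 as computed above. All remaining steps are set-algebra bookkeeping and an application of the union bound to combine the $O(n^{-\lambda})$ probabilities, with the constants $B_\lambda, B_\lambda'$ obtained by taking maxima of the constants produced by Theorems \ref{thm:rate_of_level} and \ref{thm:L_n+}.
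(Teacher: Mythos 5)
Your proposal is correct and follows essentially the same route as the paper: verify that the deterministic offset $(nh_n^d)^{-1}\psi_K$ in the cut-off for $L_n^+$ is $o\bigl((\log n/n)^{\beta/(2\beta+d)}\bigr)$, apply Theorem \ref{thm:rate_of_level} and Theorem \ref{thm:L_n+} to $L_n^-$ and $L_n^+$, and transfer the bound to $\hat C^{(\alpha)}$ via the sandwiching lemma. The paper states the last transfer tersely; your explicit set inclusion $B\triangle C^{(\alpha)}\subseteq (L_n^-\triangle C^{(\alpha)})\cup(L_n^+\triangle C^{(\alpha)})$ for any $L_n^-\subseteq B\subseteq L_n^+$ is exactly the bookkeeping the paper leaves implicit, and it is correct.
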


In the most common case $\beta=\gamma=1$, the term
$(\log n /n)^{\beta\gamma/(2\beta+d)}$ dominates the convergence rate.
If we further assume that
the level
set $L(t^{(\alpha)})$ is star-shaped (or more generally,
a union of star-shaped sets), then the rate given by
Corollary \ref{cor:rates} is near optimal, up to a logarithm term.
Indeed, the rate in equation (\ref{eq:final_rate}) is within a logarithm
term of the minimax risk for density level set estimation as developed
in \cite{Tsybakov97}.  But note that the problem considered here is
harder than estimating density level set at a fixed level
since the cut-off value
is not known in advance and needs to be estimated.  Indeed, the
logarithm term comes from estimating $t_n^{(\alpha)}$.  We also note
that the continuity condition on $p$ is slightly different than that
in \cite{Tsybakov97} where it is assumed that the density contour at
the desired level is in a H\"{o}lder class.
But the same construction of the lower bound
can be used under the global smoothness conditions A1(a) and A1(b).

A minimax risk rate of the
plug-in density level set at a fixed level has been developed by
\cite{RigolletV09}.  Although the rate is similar as that obtained in
this paper, the construction of the lower bound only applies to fixed
cut-off values close to 1, and hence has only limited application to
the range of $\alpha$ values of practical interest.

\section{Choosing the bandwidth}
\label{sec::bandwidth}

As illustrated in Figure \ref{fig::one-dim}, the efficiency of $\hat
C^{(\alpha)}$ depends on the choice of $h_n$.  The size of estimated
prediction region can be very large if the bandwidth is either too
large or too small. Therefore, in practice it is desirable to choose
a good bandwidth in an
automatic and data driven manner.  In kernel density estimation,
the choice of bandwidth has been one of the most important topics and
many approaches have been studied; see \cite{Loader99} and \cite{MammenMNS11} and
references therein.
Intuitively, a good density estimator $\hat p$ will likely lead to
a good prediction region, and the dependence on $n$ of the (near)
optimal choice of $h_n$ in Theorem \ref{thm:L_n+} is similar to that in the context of kernel
density estimation.  However, this is not quite the case \citep{SamworthW10}. The
intuition is simple:
For density estimation, a good bandwidth guarantees the accuracy
of estimated density in the whole space, whereas for level sets it suffices
to estimate the density accurately near the contour.

We propose two
practical methods to choose a good bandwidth from a given candidate set
$\mathcal H = \{h_1,\ldots, h_m\}$, based on the idea that
a good prediction region has small Lebesgue measure;
see Figures \ref{fig::algorithm2} and \ref{fig::algorithm3}.
The methods introduced here are applicable to any
prediction region estimator $\hat C$ with finite sample validity.
In both approaches, we compute the prediction region for each $h\in {\cal H}$
and choose the one with the smallest volume.
To preserve finite sample validity,
the first approach, described in Fig \ref{fig::algorithm2}, uses a Bonferroni correction.

\begin{figure}
\fbox{\parbox{4.6in}{
\begin{center}{\sf Algorithm 2: Tuning with Bonferroni Correction}\end{center}
Input: sample $\mathbf Y=(Y_1,...,Y_n)$, prediction region estimator $\hat C$,
and level $\alpha$.
\begin{enum}
\item Construct
prediction sets
$\{\hat C_h=\hat C_{h}(Y_1,...,Y_n):\ h\in {\cal H}\}$
each at level $1-\alpha/m$, where $m=|\cal H|$.
\item Let $\hat h = \argmin_h \mu(\hat C_{h})$.
\item Return $\hat C_{\hat h}$.
\end{enum}
}}
\caption{Algorithm 2: bandwidth selection.}
\label{fig::algorithm2}
\end{figure}

\begin{proposition}
If $\hat C$ satisfies finite sample validity for any $h$, then the
estimated prediction region $\hat C_{\hat h}$ given by Algorithm 2 also
satisfies finite sample validity.
\end{proposition}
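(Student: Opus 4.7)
The plan is to invoke a standard Bonferroni (union bound) argument. The key observation is that although the selected bandwidth $\hat h$ depends on the sample $(Y_1,\ldots,Y_n)$ and is therefore random (and potentially correlated with all of the individual miscoverage events), the event that $Y_{n+1}$ fails to lie in the chosen region $\hat C_{\hat h}$ is contained in the union of the miscoverage events of the individual candidate regions.

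First I would observe that, because $\hat h \in \mathcal H$ by construction, the inclusion
$$\{Y_{n+1}\notin \hat C_{\hat h}\}\subseteq \bigcup_{h\in \mathcal H}\{Y_{n+1}\notin \hat C_h\}$$
holds pointwise on the sample space: on any outcome in the event on the left, the particular bandwidth $h=\hat h$ is a witness of miscoverage, placing that outcome in the union on the right. This inclusion requires nothing about how $\hat h$ is selected; only that the selection rule returns some element of $\mathcal H$.

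Next I would use the hypothesis that each $\hat C_h$ satisfies finite sample validity at the inflated level $1-\alpha/m$, i.e.\ $\mathbb P(Y_{n+1}\in \hat C_h)\ge 1-\alpha/m$, equivalently $\mathbb P(Y_{n+1}\notin \hat C_h)\le \alpha/m$ for every $h\in\mathcal H$. A union bound over the $m=|\mathcal H|$ candidate bandwidths then gives
$$\mathbb P\bigl(Y_{n+1}\notin \hat C_{\hat h}\bigr)\le \sum_{h\in \mathcal H}\mathbb P\bigl(Y_{n+1}\notin \hat C_h\bigr)\le m\cdot\frac{\alpha}{m}=\alpha,$$
which is precisely finite sample validity of $\hat C_{\hat h}$ at level $1-\alpha$.

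There is essentially no hard step here. The only subtlety is that $\hat h$ is a random, data-driven quantity correlated with all of the $\hat C_h$, so one cannot simply condition on $\hat h = h$ and invoke the marginal validity of a single $\hat C_h$; the union bound is exactly what sidesteps this difficulty by controlling the worst case over $\mathcal H$ simultaneously, and the inflation from nominal level $\alpha$ to per-bandwidth level $\alpha/m$ is precisely the price one pays for that worst-case control. The argument makes no use of the specific structure of $\hat C$ (kernel density based or otherwise) and would apply verbatim to any finite family of prediction regions enjoying individual finite sample validity.
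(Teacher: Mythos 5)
Your proof is correct and is essentially the same Bonferroni argument the paper gives: the paper's first inequality $\mathbb{P}(Y_{n+1}\in \hat C_{\hat h})\ge \mathbb{P}(Y_{n+1}\in \hat C_h\ \forall h\in\mathcal H)$ is just the complement of your inclusion $\{Y_{n+1}\notin \hat C_{\hat h}\}\subseteq\bigcup_h\{Y_{n+1}\notin \hat C_h\}$, and both then finish with the union bound and the per-bandwidth level $1-\alpha/m$. Your remark that one cannot simply condition on $\hat h=h$ is a useful clarification but does not change the argument.
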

\begin{proof}
  Using Bonferroni correction we have \begin{align}
    \mathbb{P}(Y_{n+1}\in \hat C_{\hat h})\ge &
    \mathbb{P}(Y_{n+1}\in
    \hat C_h,~\forall h\in \cal H)\nonumber\\
    \ge& 1-\sum_{h\in \cal H}\mathbb{P}(Y_{n+1}\notin \hat C_h)\nonumber\\
    \ge& 1-\alpha\,,\nonumber
  \end{align}
where the last inequality uses the fact that each $\hat C_h$ is a
finite sample valid prediction region at level $1-\alpha/m$.
\end{proof}

When $m=|\mathcal{H}|$ is large, Algorithm 2 tends to be
conservative since each single $\hat C_h$
has coverage $1-\alpha/m$, which could be much bigger than the ideal
$(1-\alpha)$ region.
The algorithm described in Figure \ref{fig::algorithm3} uses sample splitting and only sacrifices
a constant rate of efficiency regardless of $|\mathcal{H}|$.

\begin{figure}
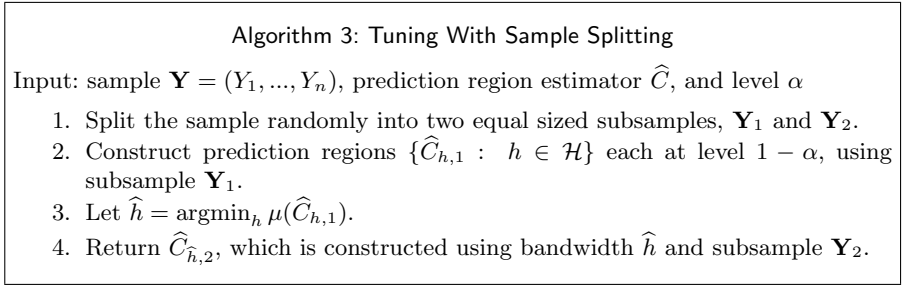

\fbox{\parbox{4.6in}{
\begin{center}{\sf Algorithm 3: Tuning With Sample Splitting}\end{center}
Input: sample $\mathbf Y=(Y_1,...,Y_n)$, prediction region
estimator $\hat C$, and level $\alpha$
\begin{enum}
\item Split the sample randomly into two equal sized subsamples, $\mathbf{Y}_1$ and $\mathbf{Y}_2$.
\item Construct
prediction regions
$\{\hat C_{h,1}:\ h\in {\cal H}\}$
each at level $1-\alpha$, using subsample
$\mathbf Y_1$.
\item Let $\hat h = \argmin_h \mu(\hat C_{h,1})$.
\item Return $\hat C_{\hat h,2}$, which is constructed using
bandwidth $\hat h$ and
subsample $\mathbf Y_2$.
\end{enum}
}}
\caption{Algorithm 3: bandwidth selection.}
\label{fig::algorithm3}
\end{figure}

\begin{proposition}
If $\hat C$ satisfies finite sample validity for all $h$, 
then $\hat C_{\hat h, 2}$, the output of Algorithm 3, also satisfies finite sample validity.
\end{proposition}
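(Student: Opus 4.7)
The plan is to prove the proposition by conditioning on the first subsample $\mathbf{Y}_1$, which is used solely to pick the bandwidth $\hat h$. The key observation is that once $\mathbf{Y}_1$ is fixed, $\hat h$ becomes deterministic, and the region $\hat C_{\hat h,2}$ is then constructed from $\mathbf{Y}_2$ alone at level $1-\alpha$ using a bandwidth that does not depend on $\mathbf{Y}_2$ or on the future observation $Y_{n+1}$.

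More concretely, I would proceed as follows. First, observe that by the iid assumption, $\mathbf{Y}_1$ is independent of $(\mathbf{Y}_2, Y_{n+1})$, and conditional on $\mathbf{Y}_1$ the tuple $(\mathbf{Y}_2, Y_{n+1})$ is still an iid sample from $P$. Second, condition on $\mathbf{Y}_1 = \mathbf{y}_1$; then $\hat h = \hat h(\mathbf{y}_1)$ is a fixed element of $\mathcal{H}$, and by the hypothesis that $\hat C$ has finite sample validity for every fixed $h\in\mathcal{H}$ when applied to an iid sample at level $1-\alpha$, we have
\[
\mathbb{P}\bigl(Y_{n+1}\in \hat C_{\hat h(\mathbf{y}_1),\,2} \,\big|\, \mathbf{Y}_1 = \mathbf{y}_1\bigr) \ge 1-\alpha.
\]
Third, taking expectations over $\mathbf{Y}_1$ yields
\[
\mathbb{P}(Y_{n+1}\in \hat C_{\hat h,2}) = \E\bigl[\mathbb{P}(Y_{n+1}\in \hat C_{\hat h,2}\mid \mathbf{Y}_1)\bigr] \ge 1-\alpha,
\]
which is the desired finite sample validity, with no Bonferroni correction and independent of $|\mathcal{H}|$.

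There is no real obstacle here; the argument is essentially a bookkeeping exercise. The only subtlety worth flagging explicitly is that the finite sample validity hypothesis on $\hat C$ must be understood in its distribution-free form: it holds for every $P$ and every sample size (in particular, for the sample size $n/2$ of $\mathbf{Y}_2$). Since Algorithm 3 constructs each candidate $\hat C_{h,1}$ at level $1-\alpha$ and the final output $\hat C_{\hat h,2}$ also at level $1-\alpha$, the coverage guarantee transfers cleanly. The independence between $\mathbf{Y}_1$ and $(\mathbf{Y}_2, Y_{n+1})$, which is what makes this work, is the reason sample splitting avoids the Bonferroni penalty incurred by Algorithm 2.
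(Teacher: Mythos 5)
Your proof is correct and takes essentially the same approach as the paper: both arguments rest on the independence of $\hat h$ from $\mathbf{Y}_2$ (which you derive by conditioning on $\mathbf{Y}_1$ and the paper states directly) together with iterated expectation, so the coverage guarantee at level $1-\alpha$ transfers without any Bonferroni penalty. Your explicit remark that validity must hold at the reduced sample size $n/2$ is a worthwhile clarification the paper leaves implicit.
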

\begin{proof}
Note that $\hat h$ is independent of $\mathbf Y_2$, as a result,
\begin{align}
  \E \left(P\big(\hat C_{\hat h, 2}\big)\right)=&
  \E\left(\E\left(P\big(\hat C_{\hat h, 2}\big)\big|\hat h\right)\right)\nonumber\\
  \ge& \E\left(1-\alpha|\hat h\right)\nonumber\\
  =&1-\alpha\,.\nonumber
\end{align}
\end{proof}

It is easy to see that these methods
have small excess loss with high probability
since, by construction,
$\mu(\hat C)\le \mu(\hat C_{h^*})+\nu_n$,
where $h^*$ is the best bandwidth that minimizes
the excess loss $\mathcal E(\mu(\hat C_{h}))$ and
$\nu_n$ is a negligible term, because for
${\cal H}$ dense enough, there exists $h_j\in {\cal H}$
such that $h_j \approx h^*.$
Although minimizing excess loss does not necessarily
minimize the symmetric difference loss,
a small excess loss itself is a desired
property in practice and is also a necessary condition of
small symmetric difference loss.
However, a more detailed relationship between excess loss
and symmetric difference loss requires extra conditions
and we leave that for future research.

\section{Numerical example}\label{sec:numerical}
A simple illustration of Algorithm 1 is presented in Figure
\ref{fig::one-dim}.  Here we
consider a two-dimensional Gaussian mixture, whose geometric
structure allows
a better visualization of the results.  We also
test the 
bandwidth selectors presented in Section \ref{sec::bandwidth}.  Due to the small
value of $\alpha$ and limited sample size,
we find Algorithm 3 more preferable than
Algorithm 2.  Thus we only present the results using bandwidth
chosen by sample splitting. For example,
when $n=200$, 100 data points are used to select the bandwidth and
the other 100 data points are used to construct the prediction
region using the selected bandwidth.

Table \ref{table::results} shows the
coverage and Lebesgue measure of the prediction region of
level .90 over 1,000 repetitions. 
The coverage is excellent and the size of the region
is close to optimal.  Both the conformal region $\hat C^{(\alpha)}$
and the outer sandwiching set $L_n^+$ gives correct coverage
regardless of the sample size.  It is worth noting that
the inner sandwiching set $L_n^-$ does not give the desired coverage, which
suggests that decreasing the cut-off value in $L_n^+$ is not
merely an artifact of proof, but a necessary tuning.
The observed excess loss also reflects a rate
of convergence that supports our theoretical results
on the symmetric difference loss.
Taking
$\hat C^{(\alpha)}$ for example, in Corollary \ref{cor:rates}
we have $\beta=\gamma=1$, $d=2$, and
$$\frac{\sqrt{(\log 200) / 200}}{\sqrt{(\log 1000) / 1000}}\approx 1.9,$$
which agrees with the observed drop of average excess loss
from 6 to 3 as
$n$ increased from 200 to 1,000.

\begin{table*}
\caption{The simulation results for 2-d Gaussian mixture with
$\alpha=0.1$ over 1000 repetitions. The Lebesgue measure of the
ideal region $\approx 28.02$.}
\label{table::results}
\begin{tabular}
  {lllll}
  \hline
  &\multicolumn{2}{c}{Coverage}&\multicolumn{2}{c}{Lebesgue Measure}\\
  &\multicolumn{1}{c}{$n=200$}&\multicolumn{1}{c}{$n=1000$}&
  \multicolumn{1}{c}{$n=200$}&\multicolumn{1}{c}{$n=1000$}\\
  \hline
$\hat{C}^{(\alpha)}$&$0.897\pm0.002$&$0.900\pm 0.001$&$34.3\pm 0.31$&$31.1\pm0.15$\\
$L_n^-$&$0.882\pm0.001$&$0.896\pm 0.001$&$34.1\pm
0.22$&$32.2\pm0.10$\\
$L_n^+$&$0.900\pm0.001$&$0.907\pm 0.001$&$36.9\pm
0.21$&$34.1\pm0.10$\\
\hline
\end{tabular}
\end{table*}

Figure \ref{fig:2d_l_shape}
shows a typical realization of the estimators. In both panels,
the dots are data points when $n=200$.  The left panel shows the
conformal prediction region with sample splitting (blue curve),
together with the
inner and outer sandwiching sets (red and green curves, respectively).
Also plotted is the ideal region $C^{(\alpha)}$ (the grey curve).
It is clear that all three estimated regions captures the main
part of the ideal region, and they are mutually close.
On the right panel we plot a realization of the depth based approach
from \cite{LiL08}.  This approach does not require any tuning parameter.
However, it takes $O(n^{d+1})$ time to evaluate
$\mathbf 1 (y\in \hat C)$ for
any single $y$.  In practice it is recommended to compute the
empirical depth only for all the data points and use the convex hull of
all data points with high depth as the estimated prediction region. As
can be seen on the picture, such a convex hull construction misses the
``L'' shape of the ideal region.  Moreover, the kernel density method is
at least 1,000 times faster than the depth based method in
our implementation even when $n=200$.

\begin{figure}[t]
\begin{center}
\includegraphics[scale=0.35]{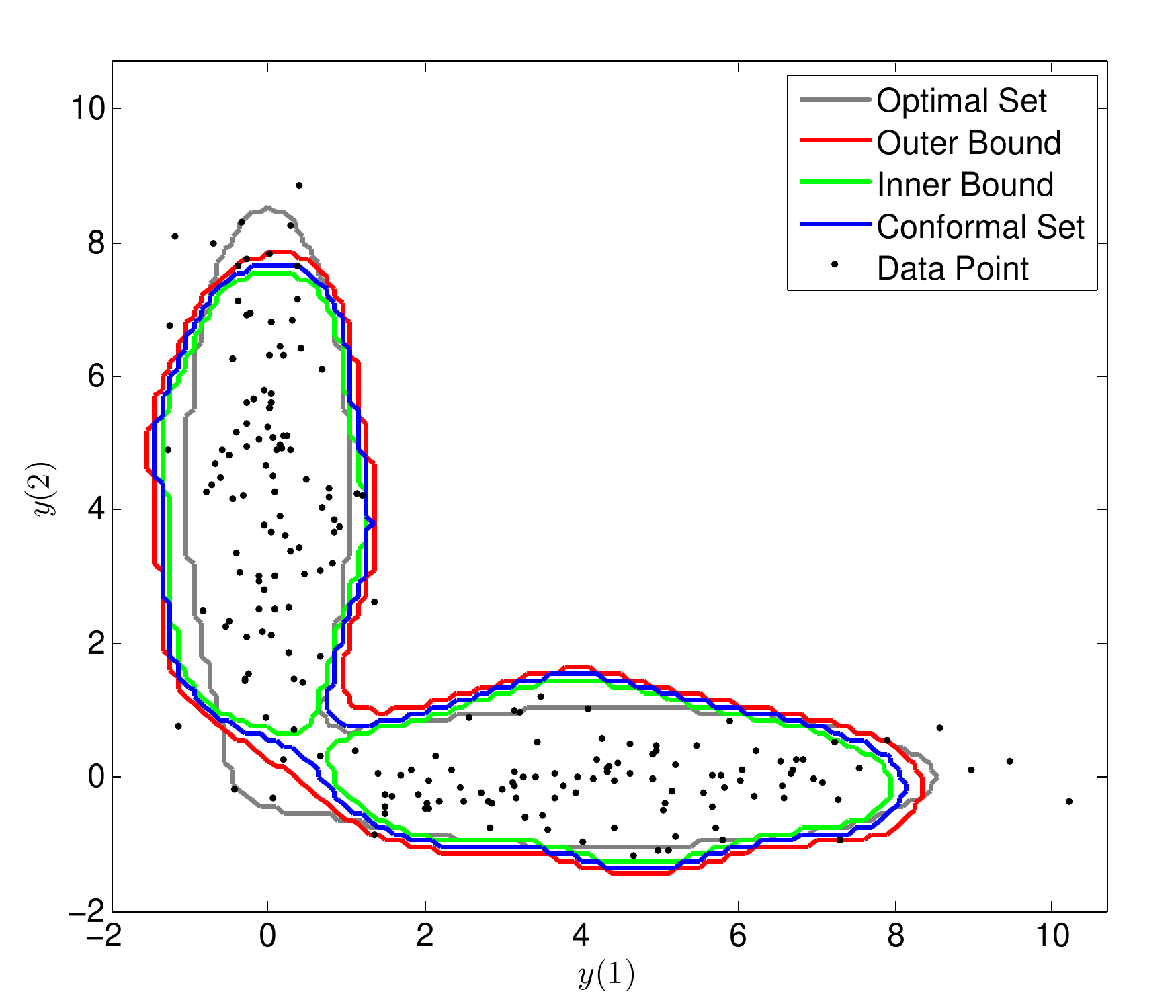}
\includegraphics[scale=0.35]{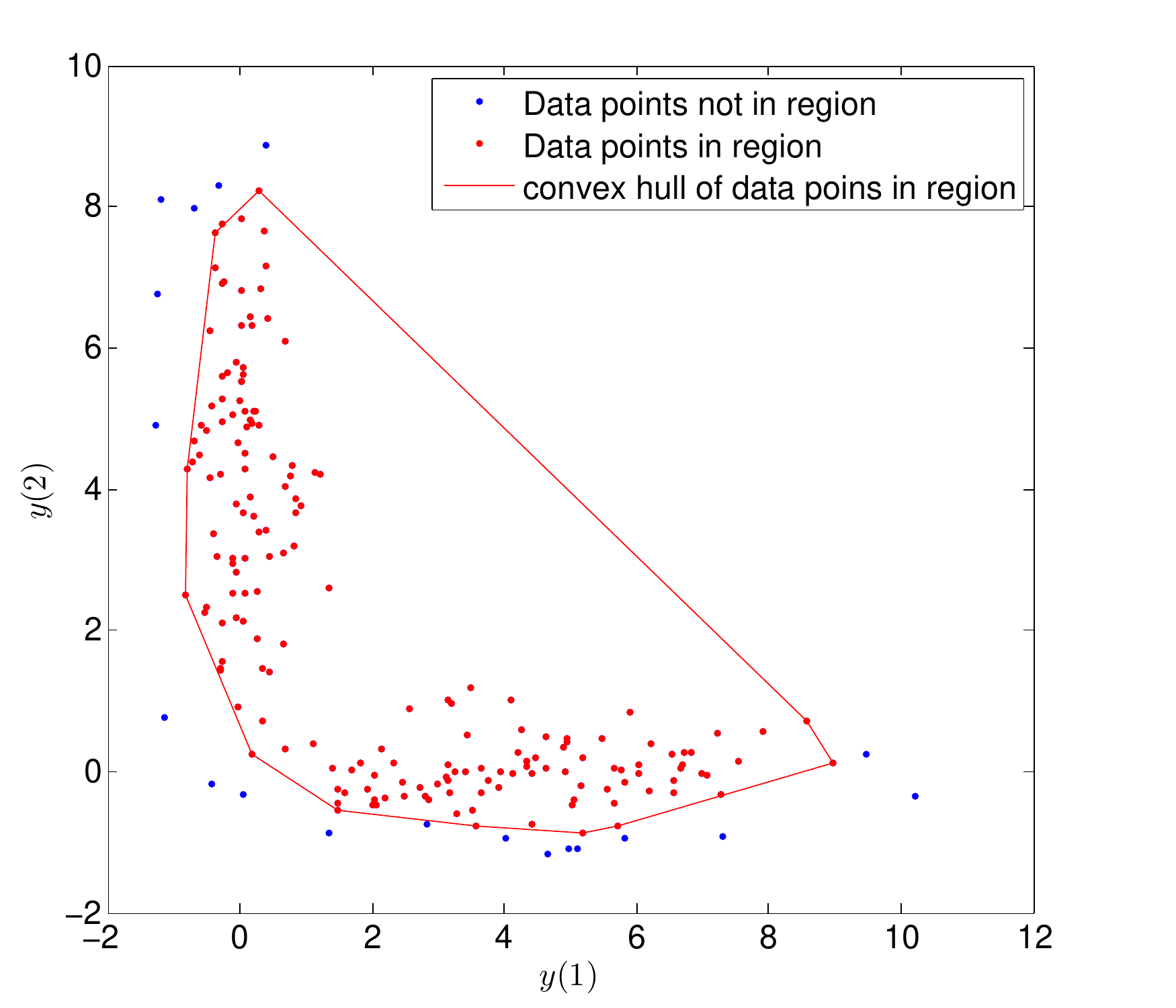}
\end{center}
\caption{Conformal prediction region
(left) and the convex hull of the multivariate spacing depth based
tolerance region (right), with data from a two-component Gaussian mixture.}
\label{fig:2d_l_shape}
\end{figure}

Figure \ref{fig:bandwidth} shows the effect of bandwidth on the
excess loss based on a typical implementation of conformal prediction,
where the $y$ axis is the Lebesgue measure of the
estimated region. We observe that for the conformal
prediction region $\hat C^{(\alpha)}$, the excess loss is
stable for a wide range of bandwidth, especially that moderate
undersmoothing does not harm the performance very much.
An intuitive explanation is that the data
near the contour is dense enough to allow for moderate
undersmoothing.  Similar phenomenon should be
expected whenever $\alpha$ is not too small.  Moreover,
the selected bandwidth from the outer sandwiching set $L_n^+$
is close to that obtained from the conformal region.  This
observation may be of practical interest since it is usually
much faster to compute $L_n^+$.

\begin{figure}[t]
\begin{center}
\includegraphics[scale=0.35]{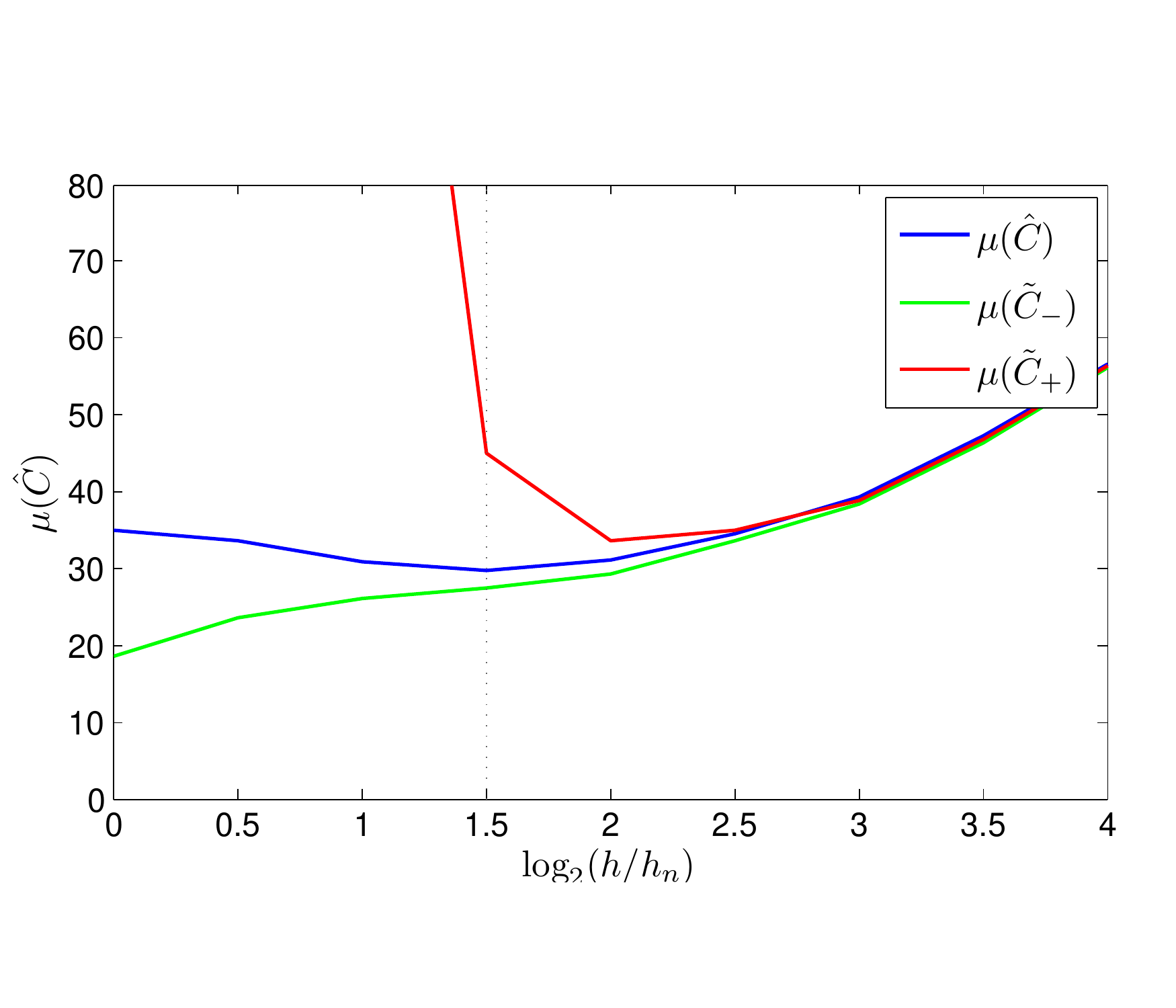}
\includegraphics[scale=0.35]{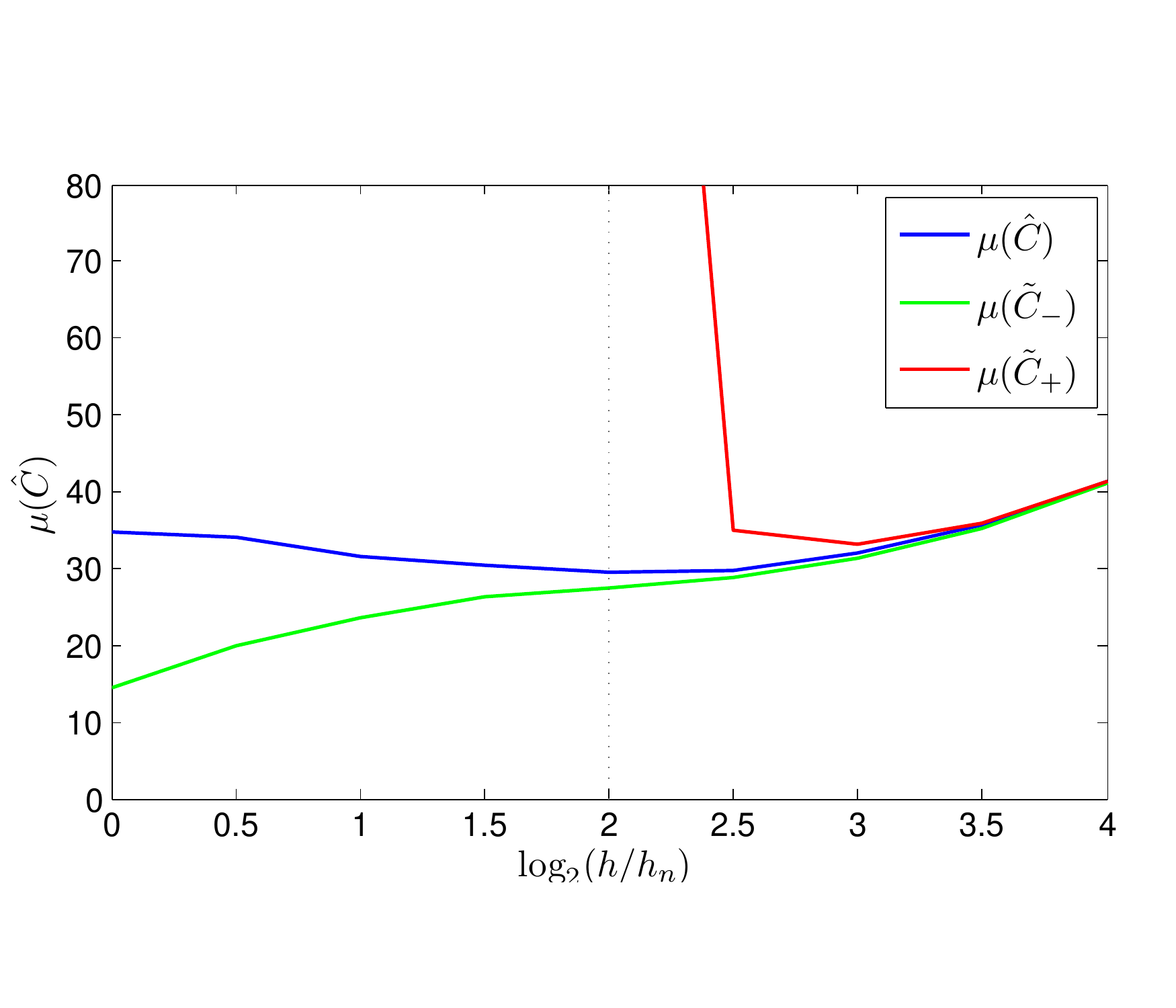}
\end{center}
\caption{Lebesgue measure of the conformal prediction region versus bandwidth
for the Gaussian mixture data with $n=200$ (left) and $n=1000$ (right). Here $h_n=\sqrt{(\log n)/n}$.}
\label{fig:bandwidth}
\end{figure}

\section{Conclusion}\label{sec:conclusion}

We have constructed a distribution free prediction region
by combining ideas from density estimation
and conformal prediction.  It can also be viewed as
a combination of the statistically equivalent block methods
and the density level set methods.
The region is easy to compute
and, under regularity conditions, is asymptotically near optimal.
Even without the regularity conditions,
the region
retains its finite sample validity.

The bandwidth tuning algorithm (Algorithm 3) used in our
simulation resembles cross-validation, a popular device
for kernel density estimators.  In
Algorithm 3, the comparison between candidate
bandwidths is based on a direct evaluation of loss, that is,
the Lebesgue measure of the estimated region.  This feature
yields both conceptually and computationally simple
implementation which is also highly stable as observed in our
simulation
studies.  Future topics of research in this aspect
include understanding
the theoretical properties of such a bandwidth selector,
its connection with other approaches in the literature
of density and level set estimation, and the performance under
both excess loss as well as the
symmetric difference loss.

In current work we are studying nonparametric procedures
that adapt to
smoothness conditions.  In principle it is possible to
further develop this method to deal with nonparametric prediction with
covariates or parametric models.
\newpage

\section{Proofs}\label{sec:proof}
\subsection{Proof of Lemma \ref{lem:sandwich}}\label{subsec:proof_sandwich}
\begin{proof}[Proof of the sandwiching lemma]
The proof is done via a direct characterization of $L_n^-$
and $L_n^+$.

First, for each $y\in L_n^-$ and $i\le i_{n,\alpha}$, we have
\begin{align} &\hat{p}_n^y(y)-\hat{p}_n^y(Y_{(i)})\nonumber\\
=&
\frac{n}{n+1}\left(\hat{p}_n(y)-\hat{p}_n(Y_{(i)})\right)
+\frac{1}{(n+1)h^d}\left(K(0)-K\left(\frac{Y_{(i)}-y}{h}\right)\right)
\nonumber\\
\ge& 0.\nonumber
\end{align}  As a result, $G_n^y(\hat{p}_n^y(y))\ge
i_{n,\alpha}/(n+1)=\tilde\alpha$ and hence
$y\in \hat{C}^{(\alpha)}$.

Similarly, for each $y\notin L_n^+$ and $i \ge i_{n,\alpha}$ we have
\begin{align}
&\hat{p}_h^y(y)-\hat{p}_h^y(Y_{(i)})\nonumber\\=&\frac{n}{n+1}
\left(\hat{p}_h(y)-\hat{p}_h(Y_{(i)})\right) +
\frac{1}{(n+1)h^d}\left(K(0)-K\left(\frac{Y_{(i)}-y}{h}\right)\right)
\nonumber\\
\le& \frac{n}{n+1}\left(\hat{p}_h(y)-\hat{p}_h(Y_{(i_{n,\alpha})})\right)
+\frac{1}{(n+1)h^d}\psi_K \nonumber\\<& 0.\nonumber
\end{align}
Therefore, $G_n^y(\hat{p}_n^y(y))\le (i_{n,\alpha}-1)/(n+1) <\tilde\alpha$
and hence $y\notin \hat{C}^{(\alpha)}$.
\end{proof}

\subsection{Proof of Theorem \ref{thm:rate_of_level}}\label{subsec:proof_rate_of_level}

\paragraph{Preliminaries}
Recall that $L^\ell(t)$ is the lower level set of $p$ at level $t$:
$\{y\in \mathbb{R}^d: p(y)\le t\}$. The bias in the
estimated cut-off level $t_n^{(\alpha)}$ can be
bounded in terms of two quantities:
$$V_n=\sup_{t > 0}|P_n(L^{\ell}(t))-P(L^{\ell}(t))|,$$
and $$R_n=||\hat{p}_n-p||_\infty\,.$$
Here $V_n$ can be viewed as the maximum of the
empirical process $P_n-P$ over a nested class of sets, and
$R_n$ is the $L_\infty$ loss of the density estimator.
As a result,
$V_n$ can be bounded using the standard empirical
process and VC dimension argument, and $R_n$ can be
bounded using the smoothness of $p$ and kernel $K$ with
a suitable choice of bandwidth.
Formally, we provide upper bounds for these two quantities
through the following lemma.

\begin{lemma}\label{lem:V_n}
Let $V_n$, $R_n$ be defined as above,
then under assumptions A1-A3, for any $\lambda>0$,
there exist constants $A_{1,\lambda}$ and $A_{2,\lambda}$ depending
on $\lambda$ only, such that,
$$
\mathbb{P}\left(V_n\ge A_{1,\lambda}\sqrt{\frac{\log n}{n}}\right)=O(n^{-\lambda}),
$$
and
$$
\mathbb{P}\left(R_n\ge A_{2,\lambda}\left(\frac{\log
n}{n}\right)^{\frac{\beta}{2\beta+d}}\right)=O(n^{-\lambda}).
$$
\end{lemma}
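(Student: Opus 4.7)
The plan is to handle the two bounds separately, since they require different machinery.

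\textbf{Bound on $V_n$.} I would observe that $P_n(L^\ell(t)) = n^{-1}\sum_{i=1}^n \mathbf{1}(p(Y_i)\le t)$ and $P(L^\ell(t)) = \mathbb{P}(p(Y)\le t)$ are, respectively, the empirical and population cumulative distribution functions of the scalar random variable $Z := p(Y)$, evaluated at $t$. Consequently $V_n$ is exactly the Kolmogorov--Smirnov statistic of the i.i.d.\ sample $(Z_1,\dots,Z_n)$, and the Dvoretzky--Kiefer--Wolfowitz--Massart inequality yields
$$\mathbb{P}(V_n \ge \epsilon) \le 2e^{-2n\epsilon^2}.$$
Taking $\epsilon = A_{1,\lambda}\sqrt{\log n/n}$ with $A_{1,\lambda}$ chosen so that $2A_{1,\lambda}^2 \ge \lambda + 1$ gives the stated bound. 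Note that this part uses only that the sample is i.i.d., with no further assumption on $K$, $p$, or $h_n$.

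\textbf{Bound on $R_n$.} I would decompose $R_n \le B_n + S_n$, where $B_n := \|\E \hat{p}_n - p\|_\infty$ is the bias and $S_n := \|\hat{p}_n - \E \hat{p}_n\|_\infty$ is the stochastic fluctuation. For $B_n$, the standard change of variables
$$\E \hat{p}_n(u) - p(u) = \int K(z)\bigl[p(u + h_n z) - p(u)\bigr]\,dz,$$
together with a Taylor expansion of $p$ through order $\lfloor\beta\rfloor$, the vanishing moment conditions on $K$ from A3, and the H\"{o}lder condition A1(a), gives $B_n \le c\,L\, h_n^\beta$ for a constant depending only on $\int\|z\|^\beta |K(z)|\,dz$. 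Under A2, $h_n^\beta \asymp (\log n/n)^{\beta/(2\beta+d)}$. For $S_n$, the class $\mathcal{F}_n = \{K(h_n^{-1}(\cdot - y)) : y\in \mathbb{R}^d\}$ has uniformly bounded envelope $K(0)$ (by A3) and, because $K$ has compact support and finite complexity, is of VC-subgraph type with index independent of $n$ and $h_n$. A Talagrand or Bernstein-type concentration inequality for uniformly bounded VC classes then yields
$$\mathbb{P}\bigl(S_n \ge C \sqrt{\log n/(n h_n^d)}\bigr) = O(n^{-\lambda})$$
for a suitable constant $C$; under A2 the right-hand rate matches $B_n$, completing the bound.

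The main obstacle will be the uniform control of $S_n$ over the unbounded domain $\mathbb{R}^d$. However, the compact support of $K$ implies that $\hat{p}_n(y) = 0$ unless some $Y_i$ lies within $h_n$ of $y$, so only a data-dependent compact set contributes to $S_n$; combined with the uniform envelope and finite VC dimension, this reduces the problem to a standard $\epsilon$-net argument on a slowly enlarging compact set, or can be invoked directly from existing uniform concentration results for kernel density estimators. Assembling the two pieces and choosing $A_{2,\lambda}$ large enough yields the claim.
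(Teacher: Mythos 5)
Your argument is correct and tracks the paper's own proof closely: you decompose $R_n$ into bias and stochastic fluctuation exactly as the paper does (the paper proves the bias bound from A1(a) and A3 and cites Gin\'e--Guillou for the sup-norm concentration of $\hat p_n-\E\hat p_n$, which is precisely the VC-subgraph/Talagrand machinery you sketch, including the remark that compact support of $K$ localizes the supremum). The one place you genuinely improve on the paper is $V_n$: the paper treats $\{L^\ell(t):t>0\}$ as an abstract nested VC class and invokes a generic bound $\mathbb{P}(V_n\ge\eta)\le C_0 n^2\exp(-n\eta^2/32)$, with the polynomial prefactor then absorbed by enlarging the constant; your observation that $V_n$ is literally the Kolmogorov--Smirnov statistic of the scalar sample $Z_i=p(Y_i)$ lets you invoke DKW--Massart directly, $\mathbb{P}(V_n\ge\epsilon)\le 2e^{-2n\epsilon^2}$, giving the same conclusion with sharper constants, no polynomial prefactor, and no citation to general empirical-process theory. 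Both routes are valid and rest on the same one-dimensional nestedness; yours is simply the more elementary and transparent version of it.
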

\begin{proof}
First, it is easy to check that the class of sets
$\{L^\ell(t):t>0\}$ are nested with VC (Vapnik-Chervonenkis)
dimension 2 and
hence by classical empirical process theory
\citep[see, for example,][Section 2.14]{vdvW96}, there exists a constant
$C_0>0$ such that for
all $\eta >0$
\begin{equation}\label{eq:V_n}
\mathbb{P}(V_n\ge \eta)
\le C_0 n^2\exp(-n\eta^2/32).
\end{equation}
Let $\eta = A\sqrt{\log n / n}$, we have
\begin{align}
  \label{eq:eta_eps}
\mathbb{P}\left(V_n\ge A\sqrt{\log n / n}\right)\le
& C_0n^2\exp(-A^2\log n /32)\nonumber\\
=&C_0 n^{-(A^2/32-2)}\,.
\end{align}
The first result then follows by choosing
 $A_{1,\lambda}=\sqrt{32(\lambda+2)}$.

Next we bound $R_n$.  Let
$\bar{p}=\mathbb{E}[\hat{p}_n]$, and $$\epsilon_n= \left(\frac{\log n}{n}\right)^{\frac{\beta}{2\beta+d}}\,.$$
By triangle inequality
$$
R_n\le ||\hat{p}_n-\bar{p}||_\infty+||\bar{p}-p||_\infty.
$$
Due to a result of \cite{GineG02} \citep[see also equation (49) in Chapter 3 of][]{PrakasaRao83},
%(see also equation (49) in Chapter 3 of \cite{PrakasaRao83}),
under the assumptions
A1(c) and A3, there exist constants $C_1$, $C_2$ and
$B_0>0$ such that have for all $B\ge B_0$,
\begin{align}\label{eq:bound_R_n_var}
\mathbb{P}\left(\|\hat{p}_n-\bar{p}\|_\infty\ge B \epsilon_n\right)
\le & C_1\exp(-C_2B^2\log (h_n^{-1}))\nonumber\\
=&C_1h_n^{C_2B^2}.
\end{align}
On the other hand, by assumptions A1(a) and A3, for some
constant $C_3$
\begin{equation}\label{eq:bound_R_n_bias}
\|\bar{p}-p\|_\infty\le C_3 h_n^\beta.\end{equation}
We note that in the inequalities (\ref{eq:eta_eps}),
(\ref{eq:bound_R_n_var}) and (\ref{eq:bound_R_n_bias})
the constants $C_i$, $i=0,...,3$, depend on $p$ and
$K$ only.
Hence,
\begin{equation}\label{eq:bound_R_n}
\mathbb{P}\left(||\hat{p}-p||_\infty\ge (C_3+B)\epsilon_n\right)
\le C_1h_n^{C_2B^2},\end{equation}
which concludes the second part by choosing
$$A_{2,\lambda}=C_3+\sqrt{\frac{(2\beta+d)\lambda}{C_2}}\,.$$
\end{proof}

\begin{proof}[Proof of Theorem \ref{thm:rate_of_level}]
Let $\alpha_n=i_{n,\alpha}/n=\lfloor (n+1)\alpha\rfloor / n$. We have
$$|\alpha_n - \alpha|\le 1/n\,.$$

Recall that the ideal level $t^{(\alpha)}$ can be written as
$$t^{(\alpha)}=G^{-1}(\alpha)\,,$$ where the function $G$
is the cumulative distribution function of $p(Y)$, as defined in
Subsection \ref{subsec:approx}.
By the modified $\gamma$-exponent
condition the inverse of
$G$ is well defined  in a small neighborhood of $\alpha$.
When $n$ is large enough,
we can
define $t^{(\alpha_n)}$ as
$$t^{(\alpha_n)}=G^{-1}(\alpha_n)\,.$$

Again, by the modified $\gamma$-exponent,
$$c_1|t^{(\alpha_n)}-t^{(\alpha)}|^\gamma\le
|G(t^{(\alpha_n)})-G(t^{(\alpha)})|= |\alpha_n-\alpha|\le n^{-1}.$$
Therefore, for $n$ large enough
\begin{equation}\label{eq:approx_alpha_alpha_n}
|t^{(\alpha_n)}-t^{(\alpha)}|\le (c_1n)^{-1/\gamma}.\end{equation}
Equation (\ref{eq:approx_alpha_alpha_n})
allows us to switch to the problem of bounding
$|t_n^{(\alpha)}-t^{(\alpha_n)}|$.

Recall that $t_n^{(\alpha)}=\hat p_n (Y_{(i_{n,\alpha})})$.
The key of the proof is to observe that
$$t_n^{(\alpha)}=G_n^{-1}(\alpha_n):=\inf\{t:G_n(t)\ge \alpha_n\}\,.$$
Then it suffices to show that $G^{-1}$ and $G_n^{-1}$ are close at
$\alpha_n$.
In fact, by definition of $R_n$ we have for all $t>0$:
$$L^\ell(t-R_n)\subseteq L_n^\ell(t)\subseteq
 L^\ell(t+R_n).$$
Applying the empirical measure $P_n$ to each term in the above:
$$P_n(L^\ell(t-R_n))\le
 P_n(L_n^\ell(t)) \le
  P_n(L^\ell(t+R_n)).$$
By definition of $V_n$,
$$P(L^\ell(t-R_n))-V_n\le P_n(L_n^\ell(t))
 \le P(L^\ell(t+R_n))+V_n.$$
By definition of $G$ and $G_n$, the above inequality
becomes
$$G(t-R_n)-V_n\le G_n(t) \le G(t+R_n)+V_n.$$

Let $W_n=R_n+(2V_n/c_1)^{1/\gamma}$.
Suppose $n$ is large enough such that
$$
\left(\frac{c_1}{n}\right)^{\frac{1}{\gamma}}+
\left(\frac{2A_{1,\lambda}}{c_1}
\sqrt{\frac{\log n}{n}}\right)^{\frac{1}{\gamma}}
<\epsilon_0,
$$
then on the event
$V_n\le A_{1,\lambda} \sqrt\frac{\log n}{n}$,
\begin{align}
  G_n\left(t^{(\alpha_n)}-W_n\right)&\le
  G\left(t^{(\alpha_n)}-W_n+R_n\right)+
  V_n\nonumber\\
  &= G\left(t^{(\alpha_n)}-(2V_n/c_1)^{1/\gamma}\right)
   -G\left(t^{(\alpha_n)}\right)+\alpha_n +V_n \nonumber\\
  & \le \alpha_n-V_n<\alpha_n\,.\nonumber
\end{align}
where the last inequality uses the left side of the modified
$\gamma$-exponent condition.
Similarly,
$G_n(t^{(\alpha_n)}+W_n)> \alpha_n.$
Hence, for $n$ large enough, if
$V_n\le A_{1,\lambda}\sqrt{(\log n)/n}$ then, \begin{equation}\label{eq:bound_t_n_alpha_n}
|t_n^{(\alpha)}-t^{(\alpha_n)}|\le W_n\,.\end{equation}

To conclude the proof, first note that
$$\left(\frac{c_1}{n}\right)^{\frac{1}{\gamma}}=
o\left(\left(\frac{\log n}{n}\right)^{\frac{1}{2\gamma}}\right)\,.$$
Then we can find constant $A_\lambda'$ such that
for all $n$ large enough,
\begin{equation}\label{eq:A_lamda_prime}
\left(A_\lambda'-\left(\frac{2A_{1,\lambda}}{c_1}\right)^{\frac{1}{\gamma}}
\right)\left(\frac{\log
n}{n}\right)^{\frac{1}{2\gamma}}\ge
\left(\frac{c_1}{n}\right)^{\frac{1}{\gamma}}\,.
\end{equation}
Let $A_\lambda=A_{2,\lambda}$. Combining equations (\ref{eq:approx_alpha_alpha_n}) and (\ref{eq:bound_t_n_alpha_n}),
on the event
\begin{equation}\label{eq:E_n_lambda}
E_{n,\lambda}:=\left\{R_n\le A_{\lambda}
\left(\frac{\log n}{n}\right)^{\frac{\beta}{2\beta+d}},
\quad V_n \le A_{1,\lambda}\left(\frac{\log
n}{n}\right)^{\frac{1}{2}}\right\}\,,\end{equation}
we have, for $n$ large enough,
%\begin{align}\label{eq:bound_t_n_alpha}
%&  \mathbb{P}\left(|t_n^{(\alpha)}-t^{(\alpha)}|\ge
%W_n+(c_1n)^{-1/\gamma}\right)\nonumber\\
%\le&
%\mathbb{P}\left(|t_n^{(\alpha)}-t^{(\alpha_n)}|\ge W_n\right)\nonumber\\
%\le &
%\mathbb{P}\left(V_n\ge A_{1,\lambda}\sqrt{\log n/n}\right) =O(n^{-\lambda})\,.
%\end{align}
\begin{align}
&|t_n^{(\alpha)}-t^{(\alpha)}|\nonumber\\
\le &|t_n^{(\alpha)}-t^{(\alpha_n)}|+\left(
\frac{c_1}{n}\right)^{\frac{1}{\gamma}}\nonumber\\
\le & W_n+\left(
\frac{c_1}{n}\right)^{\frac{1}{\gamma}}\nonumber\\
\le & R_n +(2c_1^{-1}V_n)^{1/\gamma}+\left(
\frac{c_1}{n}\right)^{\frac{1}{\gamma}}\nonumber\\
\le & A_\lambda \left(\frac{\log n}{n}\right)^{\frac{\beta}{2\beta+d}}
 + \left(\frac{2A_{1,\lambda}}{c_1}
\sqrt{\frac{\log n}{n}}\right)^{\frac{1}{\gamma}}+\left(
\frac{c_1}{n}\right)^{\frac{1}{\gamma}}\nonumber\\
\le &A_\lambda \left(\frac{\log n}{n}\right)^{\frac{\beta}{2\beta+d}}
+ A_\lambda'
\left(\frac{\log n}{n}\right)^{\frac{1}{2\gamma}}\,,\end{align}
where the second last inequality is from the definition of
$E_{n,\lambda}$ and the last inequality is from the choice of
$A_\lambda'$.
The proof is concluded by observing $\mathbb
P(E_{n,\lambda}^c)=O(n^{-\lambda})$, a consequence of
Lemma \ref{lem:V_n}.
\end{proof}

\vspace{0.2 cm}
\subsection{Proof of Theorem \ref{thm:L_n+}}\label{subsec:proof_L_n+}

\begin{proof}[Proof of Theorem \ref{thm:L_n+}] In the proof we write
$t_n$ for $t_n^{(\alpha)}$.
Observe that
\begin{align}
 & \mu\left(L_n(t_n)\triangle C^{(\alpha)}\right)\nonumber\\
  =&\mu\left(\left\{\hat{p}_n\ge t_n,~ p<t^{(\alpha)}\right\}\right) +
  \mu\left(\left\{\hat{p}_n< t_n,~ p\ge t^{(\alpha)}\right\}\right).
\end{align}

Note that
\begin{align}
  \left\{\hat{p}_n\ge t_n,~ p<t^{(\alpha)}\right\}\subseteq \left\{
  t^{(\alpha)}-|t_n-t^{(\alpha)}|-R_n\le p< t^{(\alpha)}\right\},
\end{align}
and
\begin{align}
  \left\{\hat{p}_n< t_n,~ p\ge t^{(\alpha)}\right\}\subseteq
  \left\{t^{(\alpha)}< p\le t^{(\alpha)}+|t^{(\alpha)}-t_n|+R_n\right\}.
\end{align}
Therefore
\begin{align}\label{eq:bound_mu_sym_diff}
  &L_n(t_n)\triangle C^{(\alpha)}\nonumber\\
  \subseteq&\left\{t^{(\alpha)}-|t_n-t^{(\alpha)}|-R_n< p\le t^{(\alpha)}
  +|t^{(\alpha)}-t_n|+R_n\right\}.
\end{align}

Suppose $n$ is large enough such that
$$2A_{2,\lambda}\left(\frac{\log n}{n}\right)^{\frac{\beta}{2\beta+d}}
 +
A_{\lambda}'\left(\frac{\log n}{n}\right)^{\frac{1}{2\gamma}}<
\left(\epsilon_0\wedge \frac{t^{(\alpha)}}{2}\right),$$
where the constant $A_{2,\lambda}$ is defined as in Lemma \ref{lem:V_n} and
$A_\lambda'$ is defined as in equation (\ref{eq:A_lamda_prime}).
%When $n$ is large enough,
Then on the event $E_{n,\lambda}$ as defined in equation
(\ref{eq:E_n_lambda}), applying Theorem
\ref{thm:rate_of_level} and condition (\ref{eq:modif_gamma_exp})
on the right hand side of (\ref{eq:bound_mu_sym_diff}) yields
\begin{align}\mu\left(L_n(t_n)\triangle C^{(\alpha)}
\right)
&\le \frac{P\left(L_n(t_n)\triangle
 C^{(\alpha)}\right)}{t^{(\alpha)}-|t_n-t^{(\alpha)}
 |-R_n}\nonumber\\
&\le\frac{2}{t^{(\alpha)}}c_2\left(2A_{2,\lambda}\left(\frac{\log n}{n}\right)^{\frac{\beta}{2\beta+d}}
 +
A_{\lambda}'\left(\frac{\log n}{n}\right)^{\frac{1}{2\gamma}}\right)^{\gamma}
\nonumber\\
&\le B_\lambda \left(\frac{\log n}{n}\right)^{\frac{\beta\gamma}{2\beta+d}}
+B_\lambda'\left(\frac{\log n}{n}\right)^{\frac{1}{2}},
\end{align}
where $B_\lambda$, $B_\lambda'$ are positive constants depend only on $p$, $K$, $\alpha$
and $\gamma$.
\end{proof}

%\section*{Acknowledgements}

%%%%%\begin{supplement}
%%%%%\sname{Supplement A}\label{suppA}
%%%%%\stitle{Title of the Supplement A}
%%%%%\slink[url]{http://www.e-publications.org/ims/support/dowload/imsart-ims.zip}
%%%%%\sdescription{Dum esset rex in
%%%%%accubitu suo, nardus mea dedit odorem suavitatis. Quoniam confortavit
%%%%%seras portarum tuarum, benedixit filiis tuis in te. Qui posuit fines tuos}
%%%%%\end{supplement}
%%%%%

\bibliographystyle{imsart-nameyear}
\bibliography{conformal}
%\begin{thebibliography}{9}
%
%\bibitem{r1}
%\textsc{Billingsley, P.} (1999). \textit{Convergence of
%Probability Measures}, 2nd ed.
%Wiley, New York.
%\MR{1700749}
%
%
%\bibitem{r2}
%\textsc{Bourbaki, N.}  (1966). \textit{General Topology}  \textbf{1}.
%Addison--Wesley, Reading, MA.
%
%\bibitem{r3}
%\textsc{Ethier, S. N.} and \textsc{Kurtz, T. G.} (1985).
%\textit{Markov Processes: Characterization and Convergence}.
%Wiley, New York.
%\MR{838085}
%
%\bibitem{r4}
%\textsc{Prokhorov, Yu.} (1956).
%Convergence of random processes and limit theorems in probability
%theory. \textit{Theory  Probab.  Appl.}
%\textbf{1} 157--214.
%\MR{84896}
%
%\end{thebibliography}

\end{document}